\newtheorem{theorem}{Theorem}[section]\newtheorem {lemma}{Lemma}[section]\newtheorem {proposition}{Proposition}[section]
\newtheorem {corollary}{Corollary}[section]
\title{${\mathscr F}$-equi\-con\-ti\-nui\-ty and an Analogue of Auslander-Yorke
Dichotomy Theorem}
\author{Hyonhui Ju ${^\dagger}$, Jinhyon Kim${^\dagger}$, Songhun Ri${^\dagger}$, Peter Raith${^\ddagger}$ }
\date{}
\begin{document}
\maketitle
{\makeatletter\renewcommand*\@makefnmark{}\footnotetext{
The first author was supported by the  Ernst Mach Follow-Up Grant (EZA) of OeAD for post-docs in Austria. 

*Corresponding author E-mail address:
jhyonhui@163.com

jh.kim@ryongnamsan.edu.kp }\makeatother}

\centerline{${^\dagger}$Faculty of Mathematics, {\bf Kim Il Sung}
University, Pyongyang, DPR Korea}
\centerline{${^\ddagger}$Faculty of Mathematics, University of Vienna, Austria}

\begin{abstract}

In this paper, we introduce an ${\mathscr
F}$-equi\-con\-ti\-nui\-ty and show an analogue of
Auslander-Yorke dichotomy theorem for ${\mathscr
F}$-sensitivity. Precisely, under the condition that $k{\mathscr
F}$ is translation invariant, we prove that a transitive system is
either ${\mathscr F}$-sensitive or almost $k{\mathscr
F}$-equi\-con\-ti\-nuo\-us , and so generalize the result of
previous work. Also we show that ${\mathscr
F}$-equi\-con\-ti\-nui\-ty is preserved by an open factor map and
consider the implication between ${\mathscr
F}$-equi\-con\-ti\-nui\-ty and mean equi\-con\-ti\-nui\-ty.
\end{abstract}

\textit{Keywords}: equi\-con\-ti\-nui\-ty, sensitivity, mean
equi\-con\-ti\-nui\-ty, ${\mathscr F}$-sensitivity, mean
sensitivity, Furstenberg family.

%%%%%%%%%%%%%%%%% 1. Introduction
\section{Introduction}

A topological dynamical system $(X,\; T)$ means a compact metric
space $(X,\; d)$ with a continuous self-surjection $T$ defined on
it. Throughout this paper we are only interested in a nontrivial
topological dynamical system, where the state space is a compact
metric space without isolated points. Here a trivial dynamical
system means that the state space is a singleton.

A dynamical system $(X,\; T)$ is deterministic in the sense that
the evolution of the system is described by a map $T$, so that the
present(the initial state) completely determines the future(the
forward orbit of the state). Li and Yorke introduced the term
``chaos'' into mathematics in 1975([9]) and showed that a
deterministic system has an unpredictable and complex behavior.
And later many definitions of chaos have been introduced into
mathematics by several scholars, and although there is no
universal mathematical definition of chaos, it is generally agreed
that a chaotic dynamical system should exhibit sensitive
dependence on initial conditions, i.e., minor changes in the
initial state lead to completely different long-term behavior. A
dynamical system $(X,\; T)$ is called \emph{sensitive}([3]) if
there exists $\varepsilon >0$ such that for every $x\in X$ and
every neighborhood $U_{x} $ of $x$, there exist $y\in U_{x} $ and
$n\in {\mathbb N}$ with $d(T^{n} x,\; T^{n} y)>\varepsilon $.

The equi\-con\-ti\-nui\-ty is opposite to the notion of
sensitivity. A dynamical system $(X,\; T)$ is called
\emph{equi\-con\-ti\-nuo\-us} if for every $\varepsilon >0$ there
is a $\delta >0$ such that $d(x,\; y)<\delta $ implies $d(T^{n}
x,\; T^{n} y)<\varepsilon $ for $n=0,\; 1,\; 2,\; \cdots $.

Recall that a point $x\in X$ is \emph{equicontinous }if for every
$\varepsilon >0$ there is a $\delta >0$ such that for every $y\in
X$ with $d(x,\; y)<\delta $, $d(T^{n} x,\; T^{n} y)<\varepsilon $
for $n=0,\; 1,\; 2,\; \cdots $. And recall that a dynamical system
$(X,\; T)$ is called \emph{almost equi\-con\-ti\-nuo\-us} if there
exist some equi\-con\-ti\-nuo\-us points.

The well-known Auslander-Yorke dichotomy theorem([4]) states that
a minimal dynamical system is either sensitive or
equi\-con\-ti\-nuo\-us, which was supplemented in [2]: a
transitive system is either sensitive or almost
equi\-con\-ti\-nuo\-us.

In [8], the authors introduced the notions of mean
equi\-con\-ti\-nui\-ty and mean sensitivity. A dynamical system
$(X,\; T)$ is called \emph{mean equi\-con\-ti\-nuo\-us} if for
every $\varepsilon >0$, there is a $\delta >0$ such that $d(x,\;
y)<\delta $ implies ${\mathop{\lim \sup }\limits_{n\to \infty }}
\frac{1}{n} \sum _{i=0}^{n-1}d(T^{i} x,\; T^{i} y) <\varepsilon $
and a point $x\in X$ is called \emph{mean equi\-con\-ti\-nuo\-us}
if for every $\varepsilon >0$ there is a $\delta >0$ such that for
every $y\in X$ with $d(x,\; y)<\delta $, ${\mathop{\lim \sup
}\limits_{n\to \infty }} \frac{1}{n} \sum _{i=0}^{n-1}d(T^{i} x,\;
T^{i} y) <\varepsilon $. A transitive system $(X,\; T)$ is called  \emph{almost mean equi\-con\-ti\-nuo\-us} if there is at least one  mean equi\-con\-ti\-nuo\-us  point.

A dynamical system $(X,\; T)$ is called \emph{mean sensitive} if
there exists $\varepsilon >0$(sensitive constant) such that for
every $x\in X$ and every neighborhood $U_{x} $ of $x$, there is
$y\in U_{x} $ with ${\mathop{\lim \sup }\limits_{n\to \infty }}
\frac{1}{n} \sum _{i=0}^{n-1}d(T^{i} x,\; T^{i} y) >\varepsilon $.

And they showed an analogue of Auslander-Yorke theorem, which
states that a transitive dynamical system is either almost mean
equi\-con\-ti\-nuo\-us(in the sense of containing some mean
equicontinous points) or mean sensitive and that a minimal system
is either mean equi\-con\-ti\-nuo\-us or mean sensitive.

Recently, some scholars considered various forms of sensitivity
via Furstenberg family such as syndetic sensitivity, cofinite
sensitivity and thickly sensitivity and so on.([6], [10]. [11], [13-15])

In [7] the authors considered equcontinuity via a syndetic
Furstenberg family and introduced a notion of syndetically
equi\-con\-ti\-nui\-ty and showed that an analogue of
Auslander-Yorke dichotomy theorem could also be found for some
stronger forms of sensitivity. Precisely, they proved that a
minimal system is either thickly sensitive or syndetically
equi\-con\-ti\-nuo\-us.1.	Concerning the study on analogue of Auslander-Yorke dichotomy theorem, recently we can find more result in [12] where is obtained an AuslanderYorke’s type dichotomy theorem for  r-sensitivity being  stronger version of sensitivity.

Through the notion of syndetically equi\-con\-ti\-nui\-ty in [7],
we know that it can be generalized to ${\mathscr
F}$-equi\-con\-ti\-nui\-ty, where ${\mathscr F}$ is a Furstenberg
family. Also we know that the notion of mean
equi\-con\-ti\-nui\-ty in [8] is related to ${\mathscr
F}$-equi\-con\-ti\-nui\-ty.

In this paper we introduce an ${\mathscr
F}$-equi\-con\-ti\-nui\-ty and show an analogue of
Auslander-Yorke dichotomy theorem for transitive system(Section
3) which generalize the result of Theorem 3.4 in [7].

 We also show that the notion of mean equi\-con\-ti\-nui\-ty
introduced in [8] could be considered as an ${\mathscr
F}$-equi\-con\-ti\-nui\-ty(Section 4) and that ${\mathscr
F}$-equi\-con\-ti\-nui\-ty is preserved by open factor map.

This paper is organized as follows. In section 2, we provide some
basic concepts and definitions in topological dynamical system. And we introduce the notion of ${\mathscr F}$-equi\-con\-ti\-nui\-ty.
In section 3,  ${\mathscr F}$-equi\-con\-ti\-nui\-ty and an analogue of
Auslander-Yorke dichotomy theorem are discussed.
In section 4, we show that  ${\mathscr F}$-equi\-con\-ti\-nui\-ty is preserved by open factor maps and discuss the implication between mean
equi\-con\-ti\-nui\-ty and ${\mathscr F}$-equi\-con\-ti\-nui\-ty.

%%% 2. Preliminaries
\section{Preliminaries}

\textit{2.1. Furstenberg family}

In this section  we recall some basic concepts related to Furstenberg
family(more detail in [1]).

Denote by ${\mathbb Z}_{+}$ the set of all non-negative integers.

Let ${\mathscr P}$ be the collection of all subsets of ${\mathbb
Z}_{+}$. A collection ${\mathscr F}\subset {\mathscr P}$ is
\emph{Furstenberg family} if $F_{1} \subset F_{2} $ and $F_{1} \in
{\mathscr F}$ imply $F_{2} \in {\mathscr F}$.

Given a Furstenberg family ${\mathscr F}$, define its \emph{dual family}
$k{\mathscr F}$ as follows:

\begin{center}
$k{\mathscr F}=\{ F\in {\mathscr P}:{\mathbb Z}_{+} \backslash
F\notin {\mathscr F}\} =\{ F\in {\mathscr P}:\; $for any $F'\in
{\mathscr F}$, $F\cap F'\ne \emptyset \} $.
\end{center}

Then it is easy to check that ${\mathscr F}$ is a Furstenberg
family if and only if $k{\mathscr F}$is so, and that $k(k{\mathscr
F})={\mathscr F}$.

For $i\in {\mathbb Z}_{+} $ and $F\in {\mathscr P}$, let $F+i=\{
j+i:\; j\in F\}  $ and $F-i=\{ j-i:\; j\in F\}
\cap {\mathbb Z}_{+} $. A Furstenberg family ${\mathscr F}$ is
called \emph{translation invariant} if for any $F\in {\mathscr F}$
and any $i\in {\mathbb Z}_{+} $, $F+i\in {\mathscr F}$ and $F-i\in
{\mathscr F}$.

Given two Furstenberg families ${\mathscr F}_{1} $ and ${\mathscr
F}_{2} $, define
\[{\mathscr F}_{1} \cdot {\mathscr F}_{2} =\{ F_{1} \cap F_{2} :F_{1} \in {\mathscr F}_{1} ,\; F_{2} \in {\mathscr F}_{2} \} .\]

A Furstenberg family ${\mathscr F}$ is said to be a \emph{filter}
if it satisfies ${\mathscr F}\cdot {\mathscr F}\subset {\mathscr
F}$ and it has the \emph{Ramsey property} if $F_{1} \cup F_{2} \in
{\mathscr F}$ implies $F_{1} \in {\mathscr F}$ or $F_{2} \in
{\mathscr F}$.

It can be easily checked that Furstenberg family ${\mathscr F}$
has the Ramsey property if and only if $k{\mathscr F}$ is a
filter.

Let ${\mathscr B}$ be the collection of all infinite subsets of
${\mathbb Z}_{+}$ and ${\mathscr F}_{cf} $ be the family of
cofinite subsets, that is, the collection of subsets of ${\mathbb
Z}_{+}$ with finite complements. It is easy to see that ${\mathscr
F}_{cf} =k{\mathscr B}$.

Let ${\mathscr F}_{t} $ be the collection of the subsets of
${\mathbb Z}_{+}$ which contain arbitrary long runs of positive
integers and denote its dual family by ${\mathscr F}_{s} $. The
element of ${\mathscr F}_{s} $ is called \textit{syndetic} set.
And then the set $F\in {\mathscr P}$ is a syndetic set if and only
if there is an $N\in {\mathbb N}$ such that $\{ i,\; i+1,\; \cdots
,\; i+N\} \cap F\ne \emptyset $ for every $i\in {\mathbb Z}_{+} $.
Also the element of ${\mathscr F}_{t} $ is called \emph{thick}
set.

The set $F\in {\mathscr P}$ is \emph{thickly syndetic} set if for
every $N\in {\mathbb N}$ the positions where length $N$ runs begin
form a syndetic set.

We recall the \emph{upper density} of a set $F\subset {\mathbb
Z}_{+} $ by
\[\overline{D}(F)={\mathop{\lim \sup }\limits_{n\to \infty }} \frac{\# (F\cap [0,\; n-1])}{n} ,\]
where $\# (\cdot )$ means the cardinality of a set([8]). For every
$a\in [0,\; 1)$, set $\overline{D}(a+)=\{ F\in {\mathscr B}:\; \;
\overline{D}(F)>a\} $.

Similarly, $\underline{D}(F)$, the \emph{lower density} of $F$, is
defined by
\[\underline{D}(F)={\mathop{\lim \inf }\limits_{n\to \infty }} \frac{\# (F\cap \{ 0,\; 1,\; \cdots ,\; n-1\} )}{n} .\]

The upper Banach density $BD^{*} (F)$ is defined by
\[BD^{*} (F)={\mathop{\lim \sup }\limits_{N-M\to \infty }} \frac{\# (F\cap [M,\; N])}{N-M+1} .\]

Similarly, we can define the lower Banach density $BD_{*}
(F)$([8]). For every $a\in [0,\; 1)$, set $BD^{*} (a+)=\{ F\in
{\mathscr B}:\; \; BD^{*} (F)>a\} $.

\vskip0.5cm \noindent \textit{2.2. ${\mathscr
F}$-equi\-con\-ti\-nui\-ty}

Firstly  we  recall some concepts of topological dynamical system.

A dynamical system $(X,\; T)$ is called \emph{transitive} if for
any nonempty open subsets $U,\; V\subset X$, $N_{T} (U,\; V)=\{
n\in {\mathbb Z}_{+} :\; U\cap T^{-n} V\ne \emptyset \} $ is
nonempty and a point $x\in X$ is \emph{transitive} if its orbit
$O^{+} (x)=\{ T^{n} (x):n\in {\mathbb Z}_{+} \} $ is dense in $X$.
Denote by ${\rm Tran}(X,\; T)$ the set of all transitive points of
$(X,\; T)$. $(X,\; T)$ is transitive if and only if ${\rm
Tran}(X,\; T)$ is a dense $G_{\delta } $ subset of $X$.

A dynamical system $(X,\; T)$ is called \emph{minimal} if ${\rm
Tran}(X,\; T)=X$.

Let $(X,\; d)$ be a compact metric space and $T:X\to X$ be a
continuous map. And let ${\mathscr F}$ be a Furstenberg family.
For given $x\in X$ and a subset $G\subset X$, set
\[N_{T} (x,\; G)=\{ n\in {\mathbb Z}_{+} :\; T^{n} (x)\in G\} .\]

We write as follows:

\[\Delta _{\varepsilon } =\{ (x,\; y)\in X\times X:\; d(x,\;
y)<\varepsilon \} , B(x,\; \delta )=\{ y\in X:\; d(x,\; y)<\delta
\} ,\]
\[\overline{\Delta }_{\varepsilon } =\{ (x,\; y)\in X\times
X:\; d(x,\; y)\le \varepsilon \} .\]

Now we introduce the notion of ${\mathscr
F}$-equicontinuity.

\vskip0.5cm \noindent{\bf Definition 2.1.} A dynamical system
$(X,\; T)$ is said to be \emph{${\mathscr
F}$-equi\-con\-ti\-nuo\-us} if for every $\varepsilon >0$ there
exists a $\delta >0$ such that whenever $x,\; y\in X$ with $d(x,\;
y)<\delta $,
\[N_{T\times T} ((x,\; y),\; \Delta _{\varepsilon } )\in {\mathscr F}.\]

A point $x\in X$ is called an \emph{${\mathscr
F}$-equi\-con\-ti\-nuo\-us point}(or $(X,\; T)$ is
\emph{${\mathscr F}$-equi\-con\-ti\-nuo\-us at $x\in X$}) if for
every $\varepsilon >0$ there exists a $\delta
>0$ such that for every $y\in B(x,\; \delta )$, $N_{T\times T}
((x,\; y),\; \Delta _{\varepsilon } )\in {\mathscr F}$.

A transitive system is called \emph{almost ${\mathscr
F}$-equi\-con\-ti\-nuo\-us} if there is at least one ${\mathscr
F}$-equi\-con\-ti\-nuo\-us point. The set of all ${\mathscr
F}$-equi\-con\-ti\-nuo\-us points is denoted by ${\rm Eq}_{{\mathscr
F}} (T)$.

Set
\begin{center}
${\rm Eq}_{\varepsilon }^{{\mathscr F}} :=\{ x\in X|\; $there is a
$\delta >0$ such that for any $y,\; z\in B(x,\; \delta )$,
$N_{T\times T} ((y,\; z),\; \Delta _{\varepsilon } )\in {\mathscr
F}\}$.
\end{center}

And we need more following definitions for our study.

\vskip0.5cm \noindent{\bf Definition 2.2.}([6], [10-14]) A
topological dynamical system $(X,\; T)$ is said to be
\emph{${\mathscr F}-$sensitive} if there exists $\varepsilon
>0$(\emph{${\mathscr F}-$sensitive constant}) such that for any
nonempty open subset $U$ of $X$
\[S_{f} (U,\; \varepsilon )=\{ n\in {\mathbb Z}_{+} :\; {\rm diam}\, T^{n} (U)>\varepsilon \} \in {\mathscr F}.\]

In addition, if ${\mathscr F}$ is ${\mathscr F}_{s} $(${\mathscr
F}_{cf} $, ${\mathscr F}_{t} $ respectively), then $(X,\; T)$ is
called \emph{syndetically sensitive}(\emph{cofinitely sensitive},
\emph{thickly sensitive} respectively).

\vskip0.5cm \noindent{\bf Definition 2.3.}([8]) A dynamical system
$(X,\; T)$ is said to be \emph{mean-L-stable} if for every
$\varepsilon >0$ there exists a $\delta >0$ such that whenever
$x,\; y\in X$ with $d(x,\; y)<\delta $,
\[\overline{D}(\{ n\in {\mathbb Z}_{+} :\; d(T^{n} x,\; T^{n} y)\ge \varepsilon \} )<\varepsilon .\]

\vskip0.5cm \noindent{\bf Definition 2.4.}([8]) Let $X$ and $Y$ be
topological spaces and $\pi :X\to Y$ be a map. The map $\pi $ is
called \emph{open} if the image of each nonempty open subset of
$X$ is open in $Y$, and \emph{semi-open} if the image of each
nonempty open subset of $X$ has nonempty interior in $Y$. And $\pi
$ is said to be \emph{open at a point} $x\in X$ if for every
neighborhood $U$ of $x$, $\pi (U)$ is a neighborhood of $\pi (x)$.

\section{An analogue of Auslander-Yorke dichotomy theorem for  ${\mathscr F}$-sensitivity}

In this section we study on analogues of Auslander-Yorke theorem for ${\mathscr F}$-sensitivity using  ${\mathscr F}$-equicontinuity.  

We need following lemmas for it.
%Lemma 2.1.
\begin{lemma}Let $(X,\; T)$ be a dynamical system and
${\mathscr F}$ be a translation invariant Furstenberg family. Then
${\rm Eq}_{\varepsilon }^{{\mathscr F}} $ is an open subset of $X$
and $T^{-1} ({\rm Eq}_{\varepsilon }^{{\mathscr F}} )\subset {\rm
Eq}_{\varepsilon }^{{\mathscr F}} $. Moreover if ${\mathscr F}$ is
a filter then ${\rm Eq}_{{\mathscr F}} (T)=\bigcap _{\varepsilon
>0}{\rm Eq}_{\varepsilon }^{{\mathscr F}}  $.
\end{lemma}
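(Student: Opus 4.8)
The plan is to handle the three assertions separately, since each one isolates a different hypothesis: openness needs nothing beyond the definition, the invariance $T^{-1}({\rm Eq}_{\varepsilon}^{{\mathscr F}})\subset {\rm Eq}_{\varepsilon}^{{\mathscr F}}$ uses translation invariance, and the final identity uses the filter property. For openness, I would take $x\in {\rm Eq}_{\varepsilon}^{{\mathscr F}}$ with witnessing $\delta>0$ and show that every $x'\in B(x,\;\delta/2)$ also lies in ${\rm Eq}_{\varepsilon}^{{\mathscr F}}$, with witness $\delta/2$. Indeed, if $d(x,\;x')<\delta/2$ then $B(x',\;\delta/2)\subset B(x,\;\delta)$, so the defining condition for $x$ (quantified over all pairs $y,\;z\in B(x,\;\delta)$) restricts immediately to all pairs $y,\;z\in B(x',\;\delta/2)$. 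Hence $B(x,\;\delta/2)\subset {\rm Eq}_{\varepsilon}^{{\mathscr F}}$, proving the set is open.

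For $T^{-1}({\rm Eq}_{\varepsilon}^{{\mathscr F}})\subset {\rm Eq}_{\varepsilon}^{{\mathscr F}}$, suppose $Tx\in {\rm Eq}_{\varepsilon}^{{\mathscr F}}$ with witness $\delta'$. Using continuity of $T$ at $x$, I would pick $\delta>0$ with $T(B(x,\;\delta))\subset B(Tx,\;\delta')$. Then for any $y,\;z\in B(x,\;\delta)$ we have $Ty,\;Tz\in B(Tx,\;\delta')$, so $N_{T\times T}((Ty,\;Tz),\;\Delta_{\varepsilon})\in{\mathscr F}$. The key computation is that $N_{T\times T}((Ty,\;Tz),\;\Delta_{\varepsilon})=A-1$, where $A:=N_{T\times T}((y,\;z),\;\Delta_{\varepsilon})$, since $d(T^{n}(Ty),\;T^{n}(Tz))=d(T^{n+1}y,\;T^{n+1}z)$. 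Translation invariance gives $(A-1)+1\in{\mathscr F}$, and $(A-1)+1$ equals $A$ or $A\setminus\{0\}$, in either case a subset of $A$; so upward closure of ${\mathscr F}$ yields $A\in{\mathscr F}$. Thus $x\in {\rm Eq}_{\varepsilon}^{{\mathscr F}}$ with witness $\delta$.

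For the equality ${\rm Eq}_{{\mathscr F}}(T)=\bigcap_{\varepsilon>0}{\rm Eq}_{\varepsilon}^{{\mathscr F}}$ under the filter hypothesis, I would prove two inclusions. The inclusion $\bigcap_{\varepsilon>0}{\rm Eq}_{\varepsilon}^{{\mathscr F}}\subset {\rm Eq}_{{\mathscr F}}(T)$ is direct: for fixed $\varepsilon$, take the witness $\delta$ from $x\in {\rm Eq}_{\varepsilon}^{{\mathscr F}}$ and specialize the pair $(y,\;z)$ to $(x,\;y)$, which is legitimate since $x\in B(x,\;\delta)$; using symmetry of $d$ this is exactly the defining condition of an ${\mathscr F}$-equicontinuous point. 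Conversely, if $x\in {\rm Eq}_{{\mathscr F}}(T)$, fix $\varepsilon$ and apply the definition at level $\varepsilon/2$ to obtain $\delta$ with $N_{T\times T}((x,\;w),\;\Delta_{\varepsilon/2})\in{\mathscr F}$ for all $w\in B(x,\;\delta)$. For $y,\;z\in B(x,\;\delta)$ the triangle inequality gives $N_{T\times T}((x,\;y),\;\Delta_{\varepsilon/2})\cap N_{T\times T}((x,\;z),\;\Delta_{\varepsilon/2})\subset N_{T\times T}((y,\;z),\;\Delta_{\varepsilon})$; here the filter property ${\mathscr F}\cdot{\mathscr F}\subset{\mathscr F}$ places the left-hand intersection in ${\mathscr F}$, and upward closure then gives $N_{T\times T}((y,\;z),\;\Delta_{\varepsilon})\in{\mathscr F}$, i.e. $x\in {\rm Eq}_{\varepsilon}^{{\mathscr F}}$.

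The only genuinely delicate points are bookkeeping ones, and I expect them to be where the two hypotheses are pinned down. In the invariance step one must notice that the shift $A\mapsto A-1$ can drop the index $0$, so that it is upward closure rather than set equality that rescues the argument after re-translating. In the forward inclusion of the equality, the passage from $\varepsilon$ to $\varepsilon/2$ combined with the filter property is exactly the device that turns two ``one-sided'' membership facts about $(x,\;y)$ and $(x,\;z)$ into the two-sided condition over the pair $(y,\;z)$ that defines ${\rm Eq}_{\varepsilon}^{{\mathscr F}}$. Neither step is hard, but each shows precisely where translation invariance and the filter hypothesis are needed.
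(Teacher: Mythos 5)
Your proposal is correct and follows essentially the same route as the paper's proof: openness via the $\delta/2$-ball argument, invariance via continuity of $T$ plus translation invariance and upward closure of ${\mathscr F}$, and the filter equality via the triangle inequality at level $\varepsilon/2$. Your write-up is in fact slightly more explicit than the paper's at two points the paper glosses over --- the exact identity $N_{T\times T}((Ty,\;Tz),\;\Delta_{\varepsilon})=N_{T\times T}((y,\;z),\;\Delta_{\varepsilon})-1$ (and the possible loss of the index $0$ upon re-translation) and the reverse inclusion $\bigcap_{\varepsilon>0}{\rm Eq}_{\varepsilon}^{{\mathscr F}}\subset{\rm Eq}_{{\mathscr F}}(T)$, which the paper dismisses as clear.
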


\begin{proof}
Assume that ${\mathscr F}$ is a translation invariant family. Fix
any $x\in {\rm Eq}_{\varepsilon }^{{\mathscr F}} $ and then there
exists a $\delta >0$ such that for any $z,\; w\in B(x,\; \delta
)$, $N_{T\times T} ((z,\; w),\; \Delta _{\varepsilon } )\in
{\mathscr F}$.

If $y\in B(x,\; \delta /2)$ and $z,\; w\in B(y,\; \delta /2)$ then
$z,\; w\in B(x,\; \delta )$.

So $B(x,\; \delta /2)\subset {\rm Eq}_{\varepsilon }^{{\mathscr
F}} $, thus ${\rm Eq}_{\varepsilon }^{{\mathscr F}} $ is an open
set.

Next, if $x\in T^{-1} ({\rm Eq}_{\varepsilon }^{{\mathscr F}} )$
then $Tx\in {\rm Eq}_{\varepsilon }^{{\mathscr F}} $ and there is
a $\delta
>0$ such that for any $y',\; y''\in B(Tx,\; \delta )$,
$N_{T\times T} ((y',\; y''),\; \Delta _{\varepsilon } )\in
{\mathscr F}$.

Since $T$ is continuous, there is a $\eta >0$ such that $y,\; z\in
B(x,\; \eta )$ implies $Ty,\; Tz\in B(Tx,\; \delta )$. So if $y,\;
z\in B(x,\; \eta )$ then $N_{T\times T} ((Ty,\; Tz),\; \Delta
_{\varepsilon } )\in {\mathscr F}$.

Since${\mathscr F}$ is a translation invariant and $1+N_{T\times
T} ((Ty,\; Tz),\; \Delta _{\varepsilon } )\subset N_{T\times T}
((y,\; z),\; \Delta _{\varepsilon } )$,
\[N_{T\times T} ((y,\; z),\; \Delta _{\varepsilon } )\in {\mathscr F}.\]

Therefore $x\in {\rm Eq}_{\varepsilon }^{{\mathscr F}} $ and
$T^{-1} ({\rm Eq}_{\varepsilon }^{{\mathscr F}} )\subset {\rm
Eq}_{\varepsilon }^{{\mathscr F}} $.

Finally, we are going to show that if ${\mathscr F}$ is a filter
then ${\rm Eq}_{{\mathscr F}} (T)=\bigcap _{\varepsilon >0}{\rm
Eq}_{\varepsilon }^{{\mathscr F}}  $.

If $x\in {\rm Eq}_{{\mathscr F}} (T)$ then for any $\varepsilon
>0$ there is a $\delta >0$ such that $y\in B(x,\; \delta )$
implies $N_{T\times T} ((x,\; y),\; \Delta _{\varepsilon /2} )\in
{\mathscr F}$. So for every $y,\; z\in B(x,\; \delta )$, since
\[N_{T\times T} ((y,\; z),\; \Delta _{\varepsilon } )\supset N_{T\times T} ((x,\; y),\; \Delta _{\varepsilon /2} )\cap N_{T\times T} ((x,\; z),\; \Delta _{\varepsilon /2} )\]
and ${\mathscr F}$ is a filter, $N_{T\times T} ((y,\; z),\; \Delta
_{\varepsilon } )\in {\mathscr F}$. Therefore $x\in {\rm
Eq}_{\varepsilon }^{{\mathscr F}} $ and ${\rm Eq}_{{\mathscr F}}
(T)\subset \bigcap _{\varepsilon >0}{\rm Eq}_{\varepsilon
}^{{\mathscr F}}  $.

The proof of ${\rm Eq}_{{\mathscr F}} (T)\supset \bigcap
_{\varepsilon >0}{\rm Eq}_{\varepsilon }^{{\mathscr F}}  $ is
clear. So ${\rm Eq}_{{\mathscr F}} (T)=\bigcap _{\varepsilon
>0}{\rm Eq}_{\varepsilon }^{{\mathscr F}}  $.
\end{proof}

%Lemma 2.2.
\begin{lemma}Let $(X,\; T)$ be a dynamical system and ${\mathscr
F}$ be a filter. Then $(X,\; T)$ is ${\mathscr
F}$-equi\-con\-ti\-nuo\-us if and only if ${\rm Eq}_{{\mathscr F}}
(T)=X$.
\end{lemma}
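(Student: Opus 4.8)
The plan is to prove the two implications separately, reading the equality ${\rm Eq}_{\mathscr F}(T)=X$ as the statement that every point of $X$ is an $\mathscr F$-equicontinuous point. The forward implication is essentially a triviality, while the real content lies in the converse, namely upgrading a point-dependent modulus to a single uniform one by compactness.

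For the forward implication, which does not even use the filter hypothesis, I would assume $(X,T)$ is $\mathscr F$-equicontinuous, fix $\varepsilon>0$, and take the uniform $\delta>0$ supplied by Definition 2.1. For an arbitrary $x\in X$ and any $y\in B(x,\delta)$ we have $d(x,y)<\delta$, hence $N_{T\times T}((x,y),\Delta_\varepsilon)\in\mathscr F$; since $\varepsilon>0$ was arbitrary, $x$ is an $\mathscr F$-equicontinuous point. Thus ${\rm Eq}_{\mathscr F}(T)=X$.

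For the converse I would work through the sets ${\rm Eq}_\varepsilon^{\mathscr F}$ and finish with a Lebesgue-number argument. Fix $\varepsilon>0$. First I would check ${\rm Eq}_\varepsilon^{\mathscr F}=X$: given $x$, pick $\delta_x>0$ with $N_{T\times T}((x,y),\Delta_{\varepsilon/2})\in\mathscr F$ for all $y\in B(x,\delta_x)$; then for any $y,z\in B(x,\delta_x)$ the triangle inequality gives the inclusion $N_{T\times T}((y,z),\Delta_\varepsilon)\supset N_{T\times T}((x,y),\Delta_{\varepsilon/2})\cap N_{T\times T}((x,z),\Delta_{\varepsilon/2})$, and the filter property of $\mathscr F$ forces the left-hand set into $\mathscr F$, so $x\in{\rm Eq}_\varepsilon^{\mathscr F}$ with witness $\delta_x$. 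This is exactly the filter step already carried out in the proof of Lemma 3.1, and it uses only the filter property, not translation invariance. Now the open cover $\{B(x,\delta_x/2):x\in X\}$ of the compact space $X$ admits a finite subcover $B(x_1,\delta_{x_1}/2),\dots,B(x_m,\delta_{x_m}/2)$; set $\delta=\tfrac12\min_{1\le i\le m}\delta_{x_i}>0$. If $d(u,v)<\delta$, choose $i$ with $u\in B(x_i,\delta_{x_i}/2)$; then $d(x_i,v)\le d(x_i,u)+d(u,v)<\delta_{x_i}$, so both $u,v\in B(x_i,\delta_{x_i})$, and the witness for $x_i$ yields $N_{T\times T}((u,v),\Delta_\varepsilon)\in\mathscr F$. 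Hence $\delta$ is a uniform modulus for $\varepsilon$, and $(X,T)$ is $\mathscr F$-equicontinuous.

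The only genuine obstacle is the passage from the point-dependent radii $\delta_x$ to one uniform $\delta$, which is where compactness of $X$ is indispensable; the filter hypothesis enters precisely to merge the two memberships $N_{T\times T}((x,\cdot),\Delta_{\varepsilon/2})\in\mathscr F$ on overlapping balls into a single membership in $\mathscr F$, and without it the overlap step, and hence the whole converse, would fail.
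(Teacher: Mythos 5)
Your proof is correct and is essentially identical to the paper's: the forward direction is immediate from the definition, and for the converse the paper likewise picks pointwise radii $\delta_x$ for $\varepsilon/2$, merges the two memberships via the triangle inequality and the filter property, and then uses a finite subcover of half-radius balls with $\delta=\min_i\delta_{x_i}/2$ to obtain the uniform modulus. The only cosmetic difference is that you phrase the intermediate step as ${\rm Eq}_\varepsilon^{\mathscr F}=X$, while the paper carries out the same computation inline.
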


\begin{proof}
The proof of necessity is clear. We will prove the sufficiency.

Fix any $\varepsilon >0$. Since ${\rm Eq}_{{\mathscr F}} (T)=X$,
for every $x\in X$ there is a $\delta _{x} >0$ such that $y\in
B(x,\; \delta _{x} )$ implies $N_{T\times T} ((x,\; y),\; \Delta
_{\varepsilon /2} )\in {\mathscr F}$.

Given $y,\; z\in B(x,\; \delta _{x} )$, we have
\[N_{T\times T} ((y,\; z),\; \Delta _{\varepsilon } )\supset N_{T\times T} ((x,\; y),\; \Delta _{\varepsilon /2} )\cap N_{T\times T} ((x,\; z),\; \Delta _{\varepsilon /2} )\]
 by the triangular inequality.
And since ${\mathscr F}$ is a filter, we have  $N_{T\times T} ((y,\; z),\;
\Delta _{\varepsilon } )\in {\mathscr F}$.

By the compactness of $X$, there are finite points $x_{1} ,\;
x_{2} ,\; \cdots ,\; x_{N} \in X$ such that $\bigcup
_{i=1}^{N}B(x_{i} ,\; \delta _{x_{i} } /2) =X$. Set $\delta =\min
\{ \delta _{x_{1} } /2,\; \cdots ,\; \delta _{x_{N} } /2\} $.

Now we are going to prove the ${\mathscr
F}$-equi\-con\-ti\-nui\-ty of $(X,\; T)$.

Let $u,\; v\in X$ be two points with $d(u,\; v)<\delta $. Then
there exists an $i\in \{ 1,\; 2,\; \cdots N\} $ such that $u\in
B(x_{i} ,\; \delta _{x_{i} } /2)\subset B(x_{i} ,\; \delta _{x_{i}
} )$. Also $d(u,\; v)<\delta \le \delta _{x_{i} } /2$ implies
$v\in B(x_{i} ,\; \delta _{x_{i} } )$. So $N_{T\times T} ((u,\;
v),\; \Delta _{\varepsilon } )\in {\mathscr F}$ and therefore
$(X,\; T)$ is ${\mathscr F}$-equi\-con\-ti\-nuo\-us.
\end{proof}

Next proposition shows a property of  the set of
${\mathscr F}$-equi\-con\-ti\-nuo\-us points for  a transitive dynamical system.
%Proposition 3.1.
\begin{proposition}
Let $(X,\; T)$ be a transitive dynamical system and ${\mathscr F}$
be a translation invariant Furstenberg family. Then the set of
${\mathscr F}$-equi\-con\-ti\-nuo\-us points is either empty or
residual. If in addition $(X,\; T)$ is almost ${\mathscr
F}$-equi\-con\-ti\-nuo\-us then ${\rm Tran}(X,\; T)\subset {\rm
Eq}_{{\mathscr F}} (T)$. Moreover if in addition ${\mathscr F}$ is
a filter, and $(X,\; T)$ is minimal and almost ${\mathscr
F}$-equi\-con\-ti\-nuo\-us then it is ${\mathscr
F}$-equi\-con\-ti\-nuo\-us.
\end{proposition}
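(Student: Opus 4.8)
The plan is to prove the three assertions in the order: the second first (it is the engine), then the first as a formal consequence, and finally the third. For the second assertion I would reduce everything to the open, backward-invariant sets ${\rm Eq}_{\varepsilon}^{{\mathscr F}}$ furnished by Lemma 3.1. The mechanism is the elementary transitivity principle: if $U$ is a nonempty open set with $T^{-1}U\subset U$ and $u$ is a transitive point, then density of the orbit gives some $n$ with $T^{n}u\in U$, i.e. $u\in T^{-n}U\subset U$, so $U\supset {\rm Tran}(X,\;T)$. Applying this to $U={\rm Eq}_{\varepsilon}^{{\mathscr F}}$ (open and satisfying $T^{-1}({\rm Eq}_{\varepsilon}^{{\mathscr F}})\subset {\rm Eq}_{\varepsilon}^{{\mathscr F}}$ by Lemma 3.1), \emph{once it is known to be nonempty} I obtain ${\rm Tran}(X,\;T)\subset {\rm Eq}_{\varepsilon}^{{\mathscr F}}$ for every $\varepsilon>0$, hence ${\rm Tran}(X,\;T)\subset\bigcap_{\varepsilon>0}{\rm Eq}_{\varepsilon}^{{\mathscr F}}$. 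Since $\bigcap_{\varepsilon>0}{\rm Eq}_{\varepsilon}^{{\mathscr F}}\subset {\rm Eq}_{{\mathscr F}}(T)$ is immediate from the definitions (take $z=x$) and needs no extra hypothesis, this yields ${\rm Tran}(X,\;T)\subset {\rm Eq}_{{\mathscr F}}(T)$.

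Thus the entire content is concentrated in one point: \emph{almost ${\mathscr F}$-equi\-con\-ti\-nui\-ty should force every ${\rm Eq}_{\varepsilon}^{{\mathscr F}}$ to be nonempty.} Here I would take the given ${\mathscr F}$-equi\-con\-ti\-nuo\-us point $x_{0}$, use its defining property at scale $\varepsilon/2$ to produce $\delta_{0}>0$ with $N_{T\times T}((x_{0},\;w),\;\Delta_{\varepsilon/2})\in{\mathscr F}$ for all $w\in B(x_{0},\;\delta_{0})$, and then attempt to place $x_{0}$ in ${\rm Eq}_{\varepsilon}^{{\mathscr F}}$ by comparing two arbitrary $y,\;z\in B(x_{0},\;\delta_{0})$ through the common reference $x_{0}$. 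The triangle inequality gives
\[N_{T\times T}((y,\;z),\;\Delta_{\varepsilon})\supset N_{T\times T}((x_{0},\;y),\;\Delta_{\varepsilon/2})\cap N_{T\times T}((x_{0},\;z),\;\Delta_{\varepsilon/2}).\]

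I expect this combination step to be the main obstacle. The two sets on the right lie in ${\mathscr F}$, but to conclude that their intersection, and hence the larger set on the left, again lies in ${\mathscr F}$, one needs ${\mathscr F}$ to be stable under finite intersection — essentially the filter property. Translation invariance alone only lets me transport such an estimate along the orbit, via $i+N_{T\times T}((T^{i}y,\;T^{i}z),\;\Delta_{\varepsilon})\subset N_{T\times T}((y,\;z),\;\Delta_{\varepsilon})$ together with upward closure, exactly as in Lemma 3.1; it does not by itself perform the intersection. This is precisely the place where the structural hypotheses on ${\mathscr F}$ must be spent, so I would isolate the nonemptiness of ${\rm Eq}_{\varepsilon}^{{\mathscr F}}$ as a separate claim and prove it there, rather than inside the transitivity argument.

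Granting the engine, the first assertion is formal: if ${\rm Eq}_{{\mathscr F}}(T)=\emptyset$ there is nothing to prove, and otherwise $(X,\;T)$ is almost ${\mathscr F}$-equi\-con\-ti\-nuo\-us, so the engine gives ${\rm Eq}_{{\mathscr F}}(T)\supset {\rm Tran}(X,\;T)$; as $(X,\;T)$ is transitive, ${\rm Tran}(X,\;T)$ is a dense $G_{\delta}$, hence residual, and any superset of a residual set is residual. The third assertion is likewise quick: if $(X,\;T)$ is minimal then ${\rm Tran}(X,\;T)=X$, so the engine forces ${\rm Eq}_{{\mathscr F}}(T)=X$, and when ${\mathscr F}$ is in addition a filter, Lemma 3.2 upgrades ${\rm Eq}_{{\mathscr F}}(T)=X$ to genuine ${\mathscr F}$-equi\-con\-ti\-nui\-ty of $(X,\;T)$, which completes the proof.
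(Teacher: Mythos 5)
Your overall architecture is exactly the paper's: Lemma 3.1 supplies the open, backward-invariant sets ${\rm Eq}_{\varepsilon}^{{\mathscr F}}$, the transitivity principle pushes every transitive point into each \emph{nonempty} ${\rm Eq}_{\varepsilon}^{{\mathscr F}}$, the unconditional inclusion $\bigcap_{\varepsilon>0}{\rm Eq}_{\varepsilon}^{{\mathscr F}}\subset{\rm Eq}_{{\mathscr F}}(T)$ then yields ${\rm Tran}(X,\;T)\subset{\rm Eq}_{{\mathscr F}}(T)$, residuality follows because ${\rm Tran}(X,\;T)$ is a dense $G_{\delta}$, and Lemma 3.2 finishes the minimal case. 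All of that is sound. But your proof is incomplete at precisely the point you flag: you never establish that the existence of one ${\mathscr F}$-equi\-con\-ti\-nuo\-us point forces every ${\rm Eq}_{\varepsilon}^{{\mathscr F}}$ to be nonempty. As you observe, the natural route through the reference point $x_{0}$, namely $N_{T\times T}((y,\;z),\;\Delta_{\varepsilon})\supset N_{T\times T}((x_{0},\;y),\;\Delta_{\varepsilon/2})\cap N_{T\times T}((x_{0},\;z),\;\Delta_{\varepsilon/2})$, requires ${\mathscr F}$ to be closed under finite intersections, i.e.\ to be a filter, while the first two assertions of the proposition assume only translation invariance. Announcing that you would ``isolate the nonemptiness as a separate claim and prove it there'' is not a proof; the engine on which everything else rides is missing, so the proposal does not prove the proposition as stated.

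For what it is worth, you have put your finger on a soft spot of the paper itself. The paper's proof silently invokes the identity ${\rm Eq}_{{\mathscr F}}(T)=\bigcap_{\varepsilon>0}{\rm Eq}_{\varepsilon}^{{\mathscr F}}$, but Lemma 3.1 proves the nontrivial inclusion ${\rm Eq}_{{\mathscr F}}(T)\subset\bigcap_{\varepsilon>0}{\rm Eq}_{\varepsilon}^{{\mathscr F}}$ only under the filter hypothesis --- exactly the intersection step you identified. The gap is harmless where the proposition is actually used: in the proof of Theorem 3.1, failure of ${\mathscr F}$-sensitivity produces, for each $\varepsilon>0$, an open set $U$ with ${\rm diam}\,T^{n}(U)\le\varepsilon/2$ for all $n$ in a $k{\mathscr F}$-set, and this controls \emph{all} pairs $y,\;z\in U$ simultaneously, so membership in ${\rm Eq}_{\varepsilon}^{k{\mathscr F}}$ is obtained directly and no intersection of two ${\mathscr F}$-sets is ever taken. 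The honest repair, for your write-up and for the paper, is either to add the filter hypothesis to the first two assertions, or to strengthen ``almost ${\mathscr F}$-equi\-con\-ti\-nuo\-us'' to the hypothesis that ${\rm Eq}_{\varepsilon}^{{\mathscr F}}\ne\emptyset$ for every $\varepsilon>0$; under either reading your argument (and the paper's) goes through verbatim.
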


\begin{proof}
By Lemma 2.1, the set ${\rm Eq}_{\varepsilon }^{{\mathscr F}} $ is
open and $T^{-1} ({\rm Eq}_{\varepsilon }^{{\mathscr F}} )\subset
{\rm Eq}_{\varepsilon }^{{\mathscr F}} $. If ${\rm
Eq}_{\varepsilon }^{{\mathscr F}} $ is not empty then for any
nonempty open subset $U$ of $X$, by the transitivity of $(X,\;
T)$, there exists an $n\in {\mathbb Z}_{+} $ such that $\emptyset
\ne T^{-n} ({\rm Eq}_{\varepsilon }^{{\mathscr F}} )\cap U\subset
{\rm Eq}_{\varepsilon }^{{\mathscr F}} \cap U$. So since ${\rm
Eq}_{\varepsilon }^{{\mathscr F}} $ intersects with any nonempty
open subset, ${\rm Eq}_{\varepsilon }^{{\mathscr F}} $ is dense in
$X$.

Thus by the Baire Category theorem, ${\rm Eq}_{{\mathscr F}} (T)$
is empty or residual because ${\rm Eq}_{{\mathscr F}} (T)=\bigcap
_{\varepsilon >0}{\rm Eq}_{\varepsilon }^{{\mathscr F}}  $.

If ${\rm Eq}_{{\mathscr F}} (T)$ is residual then for any
$\varepsilon
>0$ ${\rm Eq}_{\varepsilon }^{{\mathscr F}} $ is open and dense. If
$x\in Tran(X,\; T)$ then there exists $n\in N$ such that $T^{n}
(x)\in {\rm Eq}_{\varepsilon }^{{\mathscr F}} $. So $x\in T^{-n}
({\rm Eq}_{\varepsilon }^{{\mathscr F}} )\subset {\rm
Eq}_{\varepsilon }^{{\mathscr F}} $. Thus $x\in \bigcap
_{\varepsilon
>0}{\rm Eq}_{\varepsilon }^{{\mathscr F}}  ={\rm Eq}_{{\mathscr F}} (T)$.

If $(X,\; T)$ is minimal then ${\rm Tran}(X,\; T)=X$ and so ${\rm
Eq}_{{\mathscr F}} (T)=X$. By Lemma 2.2 $(X,\; T)$ is ${\mathscr
F}$-equi\-con\-ti\-nuo\-us.
\end{proof}

Next  dichotomy theorem and corollary are anologues of the Auslander-Yorke's dichotomy theorem for ${\mathscr F}$-sensitivity.

%Theorem 3.1.
\begin{theorem}
Let $(X,\; T)$ be a transitive dynamical system and ${\mathscr F}$
be a Furstenberg family such that its dual family $k{\mathscr F}$
is translation invariant. Then $(X,\; T)$ is either ${\mathscr
F}$-sensitive and ${\rm Eq}_{k{\mathscr F}} (T)=\emptyset $, or
almost $k{\mathscr F}$-equi\-con\-ti\-nuo\-us and ${\rm Tran}(X,\;
T)\subset {\rm Eq}_{k{\mathscr F}} (T)$.
\end{theorem}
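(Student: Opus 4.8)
The plan is to establish the dichotomy by showing that these two alternatives are mutually exclusive and jointly exhaustive, with the key being a dichotomy at the level of the sets $\mathrm{Eq}_{\varepsilon}^{k\mathscr{F}}$. The central observation is the duality between the $\mathscr{F}$-sensitivity condition, which asks that $S_T(U,\varepsilon)=\{n:\operatorname{diam}T^n(U)>\varepsilon\}\in\mathscr{F}$ for every open $U$, and the $k\mathscr{F}$-equicontinuity condition, which asks that the return-time sets into the diagonal neighborhood lie in $k\mathscr{F}$. Since $k\mathscr{F}=\{F:\mathbb{Z}_+\setminus F\notin\mathscr{F}\}$, these two families are in precise complementary opposition, and this is what drives the whole argument.

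First I would suppose that $(X,T)$ is \emph{not} almost $k\mathscr{F}$-equicontinuous, i.e.\ $\mathrm{Eq}_{k\mathscr{F}}(T)=\emptyset$, and derive $\mathscr{F}$-sensitivity. By Proposition~3.1 (applied with the family $k\mathscr{F}$, which is translation invariant by hypothesis), the set of $k\mathscr{F}$-equicontinuous points is either empty or residual; since we are assuming it is not all-residual-containing-the-transitive-points, emptiness must hold, and then $\mathrm{Eq}_{k\mathscr{F}}(T)=\bigcap_{\varepsilon>0}\mathrm{Eq}_{\varepsilon}^{k\mathscr{F}}$ forces some $\mathrm{Eq}_{\varepsilon_0}^{k\mathscr{F}}$ to fail to be dense (otherwise Baire category would make the intersection residual). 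By Lemma~3.1 each $\mathrm{Eq}_{\varepsilon}^{k\mathscr{F}}$ is open and $T$-invariant in the backward sense, so by the transitivity argument in Proposition~3.1 a non-dense such set must in fact be \emph{empty}. I would then unpack what $\mathrm{Eq}_{\varepsilon_0}^{k\mathscr{F}}=\emptyset$ means: for every $x$ and every $\delta>0$ there exist $y,z\in B(x,\delta)$ with $N_{T\times T}((y,z),\Delta_{\varepsilon_0})\notin k\mathscr{F}$, which by definition of $k\mathscr{F}$ means $\{n:d(T^ny,T^nz)\ge\varepsilon_0\}\in\mathscr{F}$. Translating this into the diameter condition on $T^n(U)$ for $U=B(x,\delta)$ yields $S_T(U,\varepsilon_0')\in\mathscr{F}$ for a suitable $\varepsilon_0'$ (using the upward-closure of $\mathscr{F}$), giving $\mathscr{F}$-sensitivity with constant $\varepsilon_0'$.

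Conversely, I would show the two alternatives cannot overlap: if $(X,T)$ is almost $k\mathscr{F}$-equicontinuous, it cannot be $\mathscr{F}$-sensitive. This is again the duality: a $k\mathscr{F}$-equicontinuity point $x$ gives, for any sensitivity candidate constant $\varepsilon$, a $\delta$ with $N_{T\times T}((x,y),\Delta_{\varepsilon})\in k\mathscr{F}$ for all $y\in B(x,\delta)$; but $\mathscr{F}$-sensitivity at the open set $B(x,\delta)$ would put $S_T(B(x,\delta),\varepsilon)\in\mathscr{F}$, and since any member of $\mathscr{F}$ meets any member of $k\mathscr{F}$ nontrivially, one obtains an instant $n$ at which the orbits both stay within $\varepsilon$ and have diameter exceeding $\varepsilon$, a contradiction once the constants are matched. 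The ``$\mathrm{Tran}(X,T)\subset\mathrm{Eq}_{k\mathscr{F}}(T)$'' clause of the second alternative is then exactly the content of the second assertion of Proposition~3.1.

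\textbf{The main obstacle} I anticipate is the careful bookkeeping of $\varepsilon$-constants when passing between the pointwise statement $d(T^ny,T^nz)\ge\varepsilon_0$ for specific pairs and the uniform diameter statement $\operatorname{diam}T^n(U)>\varepsilon_0'$; one must choose the sensitivity constant slightly smaller than $\varepsilon_0$ and invoke upward closure of $\mathscr{F}$ to pass from the larger set $\{n:d(T^ny,T^nz)\ge\varepsilon_0\}$ to the set $S_T(U,\varepsilon_0')\supset$ it. A subtler point is ensuring that the failure of density of $\mathrm{Eq}_{\varepsilon_0}^{k\mathscr{F}}$ genuinely upgrades to emptiness; this relies on the backward-invariance $T^{-1}(\mathrm{Eq}_{\varepsilon}^{k\mathscr{F}})\subset\mathrm{Eq}_{\varepsilon}^{k\mathscr{F}}$ from Lemma~3.1 together with transitivity, exactly as in the first paragraph of the proof of Proposition~3.1, so I would reuse that mechanism rather than re-deriving it.
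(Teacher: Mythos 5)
Your first step --- assuming ${\rm Eq}_{k{\mathscr F}}(T)=\emptyset$ and deriving ${\mathscr F}$-sensitivity --- is correct, and it is essentially the paper's own proof run in the contrapositive direction: the paper assumes $(X,\;T)$ is not ${\mathscr F}$-sensitive, produces for each $\varepsilon>0$ an open $U$ with $S_T(U,\;\varepsilon/2)\notin{\mathscr F}$, observes that points of $U$ then satisfy the $\varepsilon$-equicontinuity condition with the set $F={\mathbb Z}_+\backslash S_T(U,\;\varepsilon/2)\in k{\mathscr F}$, and concludes via Proposition 3.1; you instead locate, via Baire category and Lemma 3.1, a single $\varepsilon_0$ with ${\rm Eq}_{\varepsilon_0}^{k{\mathscr F}}=\emptyset$ and unpack that into sensitivity with constant $\varepsilon_0/2$. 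Same duality, same $\varepsilon/2$ bookkeeping, same supporting lemmas. (Two harmless imprecisions: you invoke the equality ${\rm Eq}_{k{\mathscr F}}(T)=\bigcap_{\varepsilon>0}{\rm Eq}_{\varepsilon}^{k{\mathscr F}}$, which Lemma 3.1 gives only when the family is a filter --- but you need only the trivial inclusion $\bigcap_{\varepsilon>0}{\rm Eq}_{\varepsilon}^{k{\mathscr F}}\subset{\rm Eq}_{k{\mathscr F}}(T)$, which holds in general; and your appeal to Proposition 3.1 to conclude emptiness is redundant, since ${\rm Eq}_{k{\mathscr F}}(T)=\emptyset$ is your standing assumption.) This step, together with the second assertion of Proposition 3.1 applied in the case ${\rm Eq}_{k{\mathscr F}}(T)\neq\emptyset$, already proves the whole theorem.

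The genuine problem is your second step, the claim that almost $k{\mathscr F}$-equicontinuity excludes ${\mathscr F}$-sensitivity. The argument breaks at ``one obtains an instant $n$ at which the orbits both stay within $\varepsilon$ and have diameter exceeding $\varepsilon$'': intersecting $S_T(B(x,\;\delta),\;\varepsilon')\in{\mathscr F}$ with $N_{T\times T}((x,\;y),\;\Delta_{\varepsilon})\in k{\mathscr F}$ produces, for each \emph{fixed} $y$, a time $n$ at which the single pair $(x,\;y)$ is $\varepsilon$-close while ${\rm diam}\,T^n(B(x,\;\delta))>\varepsilon'$ --- and these two facts are perfectly compatible, because the diameter excess at time $n$ is witnessed by some other pair $y',\;z'\in B(x,\;\delta)$ depending on $n$, not by $(x,\;y)$. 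The set $S_T(U,\;\varepsilon')$ is a union of pairwise separation sets over uncountably many pairs, and an upward-closed family can contain such a union while containing none of its members; converting pairwise closeness into control of the diameter requires $k{\mathscr F}$ to be closed under intersection, i.e.\ a filter (equivalently, ${\mathscr F}$ to have the Ramsey property), which is exactly the additional hypothesis of Corollary 3.1 and is not assumed in Theorem 3.1. Fortunately this exclusivity claim is neither asserted by the theorem nor needed for it: the two alternatives are already mutually exclusive, since the first contains ${\rm Eq}_{k{\mathscr F}}(T)=\emptyset$ and the second contains almost $k{\mathscr F}$-equicontinuity, i.e.\ ${\rm Eq}_{k{\mathscr F}}(T)\neq\emptyset$. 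Deleting that paragraph leaves a complete and correct proof.
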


\begin{proof}
It suffices to show that if $(X,\; T)$ is not ${\mathscr
F}$-sensitive then ${\rm Tran}(X,\; T)\subset {\rm Eq}_{k{\mathscr
F}} (T)$. Assume that $(X,\; T)$ is not ${\mathscr F}$-sensitive.
Then for any $\varepsilon >0$ there exists an nonempty open subset
$U$ of $X$ such that $S_{T} (U,\; \varepsilon /2)\notin {\mathscr
F}$. So
\[F=\{ n\in {\mathbb Z}_{+} :\; {\rm diam}T^{n} (U)\le \varepsilon /2\} ={\mathbb Z}_{+} \backslash S_{T} (U,\; \varepsilon /2)\in k{\mathscr F}.\]

Take any $x\in U$ and then there is a $\delta >0$ with $B(x,\;
\delta )\subset U$.

For any $y\in B(x,\; \delta )\subset U$, since $N_{T\times T}
((x,\; y),\; \Delta _{\varepsilon } )\supset N_{T\times T} ((x,\;
y),\; \overline{\Delta }_{\varepsilon /2} )\supset F$, $N_{T\times
T} ((x,\; y),\; \Delta _{\varepsilon } )\in k{\mathscr F}$. Thus
$x\in {\rm Eq}_{k{\mathscr F}} (T)$ and by Proposition 3.1, $(X,\;
T)$ is almost $k{\mathscr F}$-equi\-con\-ti\-nuo\-us and ${\rm
Tran}(X,\; T)\subset {\rm Eq}_{k{\mathscr F}} (T)$.
\end{proof}

\textbf{Remark 1.} Theorem 3.1 coincides with Theorem 3.4 in [7] if
the family ${\mathscr F}$ is replaced with thick family ${\mathscr
F}_{t} $. So Theorem 3.1 is a generalization of Theorem 3.4 in
[7].

%Corollary 3.1.
\begin{corollary}
Assume that ${\mathscr F}$ has Ramsey property and its dual family
$k{\mathscr F}$ is translation invariant. If $(X,\; T)$ is minimal
then it is either ${\mathscr F}$-sensitive or $k{\mathscr
F}$-equi\-con\-ti\-nuo\-us.
\end{corollary}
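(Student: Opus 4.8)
The plan is to obtain the statement as a direct consequence of Theorem 3.1 together with the characterisation of equicontinuity in Lemma 2.2, after first translating the Ramsey hypothesis on ${\mathscr F}$ into the filter property of $k{\mathscr F}$.

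First I would record the two facts that make the hypotheses fit the earlier results. Since every minimal system is transitive, $(X,\;T)$ is in particular transitive, so Theorem 3.1 is applicable. Moreover, by the equivalence noted in Section 2 (a Furstenberg family has the Ramsey property if and only if its dual family is a filter), the assumption that ${\mathscr F}$ has the Ramsey property is exactly the statement that $k{\mathscr F}$ is a filter. Combined with the hypothesis that $k{\mathscr F}$ is translation invariant, this shows that $k{\mathscr F}$ satisfies all the standing assumptions needed below.

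Next I would invoke Theorem 3.1. It yields the dichotomy that $(X,\;T)$ is either ${\mathscr F}$-sensitive, or almost $k{\mathscr F}$-equicontinuous with ${\rm Tran}(X,\;T)\subset {\rm Eq}_{k{\mathscr F}}(T)$. In the first case there is nothing further to prove, so I would concentrate on the second alternative and show that it forces $k{\mathscr F}$-equicontinuity. Because $(X,\;T)$ is minimal we have ${\rm Tran}(X,\;T)=X$, and the inclusion ${\rm Tran}(X,\;T)\subset {\rm Eq}_{k{\mathscr F}}(T)$ then gives ${\rm Eq}_{k{\mathscr F}}(T)=X$.

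Finally, since $k{\mathscr F}$ is a filter, Lemma 2.2 applies with the family $k{\mathscr F}$ and states that $(X,\;T)$ is $k{\mathscr F}$-equicontinuous if and only if ${\rm Eq}_{k{\mathscr F}}(T)=X$; having just established the latter equality, I conclude that $(X,\;T)$ is $k{\mathscr F}$-equicontinuous, which settles the second alternative. One could equally cite the concluding ``moreover'' clause of Proposition 3.1 applied to the family $k{\mathscr F}$. As the argument is a short deduction from already-proved statements, there is no genuine obstacle; the only point requiring care is the bookkeeping of which hypotheses are consumed where—translation invariance of $k{\mathscr F}$ feeding Theorem 3.1, and the filter property of $k{\mathscr F}$ (i.e.\ the Ramsey property of ${\mathscr F}$) feeding Lemma 2.2—so that the Ramsey assumption is used precisely at the step where $k{\mathscr F}$ must be a filter.
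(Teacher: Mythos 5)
Your proposal is correct and is essentially the paper's intended argument: the paper states this corollary without an explicit proof, and the expected deduction is exactly the one you give --- minimality yields transitivity so Theorem 3.1 applies, the Ramsey property of ${\mathscr F}$ is equivalent to $k{\mathscr F}$ being a filter, and then the ``moreover'' clause of Proposition 3.1 (equivalently, Lemma 2.2 applied to $k{\mathscr F}$ once ${\rm Eq}_{k{\mathscr F}}(T)={\rm Tran}(X,\;T)=X$) upgrades almost $k{\mathscr F}$-equicontinuity to $k{\mathscr F}$-equicontinuity. Your bookkeeping of where translation invariance and the filter property are each consumed is also accurate.
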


\textbf{Remark 2.} Corollary 3.1 is a generalization of Auslander-Yorke's theorem which is obtained by replacing the family  ${\mathscr F}$ with the family  ${\mathscr B}$.

\section{The  implication between mean equi\-con\-ti\-nui\-ty and
${\mathscr F}$-equi\-con\-ti\-nui\-ty}

In this section we discuss some relations between mean equi\-con\-ti\-nui\-ty and
${\mathscr F}$-equi\-con\-ti\-nui\-ty.

In [8] is proved that mean equicontinuity is preserved by factor maps.

Here we are going to show that ${\mathscr
F}$-equi\-con\-ti\-nui\-ty is preserved by open factor maps.

%Lemma 3.1.
\begin{lemma} Let $(X,\; T)$ and $(Y,\; S)$ be
topological dynamical systems and $\pi :X\to Y$ be a factor map.
If $x\in X$ is an ${\mathscr F}$-equi\-con\-ti\-nuo\-us point of
$T$ and $\pi $ is open at $x\in X$ then $y=\pi (x)$ is an
${\mathscr F}$-equi\-con\-ti\-nuo\-us point of $S$.
\end{lemma}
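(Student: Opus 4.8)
The plan is to push the ${\mathscr F}$-equicontinuity enjoyed by $x$ in $(X,T)$ down to $y=\pi(x)$ in $(Y,S)$. The two tools that make this work are the openness of $\pi$ at $x$, which lets me lift points of $Y$ near $y$ back to points of $X$ near $x$, and the (uniform) continuity of $\pi$, which lets me convert closeness in $X$ after iterating $T$ into closeness in $Y$ after iterating $S$. Throughout I write $d$ for the metric on $X$ and $\rho$ for the metric on $Y$, and $B(y,\delta')$ for the $\rho$-ball around $y$.

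First I would fix $\varepsilon>0$. Since $X$ is compact, $\pi$ is uniformly continuous, so there is an $\varepsilon'>0$ such that $d(a,b)<\varepsilon'$ implies $\rho(\pi(a),\pi(b))<\varepsilon$ for all $a,b\in X$. Applying the ${\mathscr F}$-equicontinuity of $T$ at $x$ to this $\varepsilon'$, I obtain a $\delta>0$ with $N_{T\times T}((x,z),\Delta_{\varepsilon'})\in{\mathscr F}$ for every $z\in B(x,\delta)$. Next I would use that $\pi$ is open at $x$: the set $\pi(B(x,\delta))$ is then a neighborhood of $y$, so there is a $\delta'>0$ with $B(y,\delta')\subset\pi(B(x,\delta))$.

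The heart of the argument is the inclusion $N_{T\times T}((x,z),\Delta_{\varepsilon'})\subset N_{S\times S}((y,y'),\Delta_{\varepsilon})$ for a suitable lift $z$ of $y'$. Concretely, take any $y'\in B(y,\delta')$; by the choice of $\delta'$ there is $z\in B(x,\delta)$ with $\pi(z)=y'$. If $n$ lies in the left-hand set then $d(T^nx,T^nz)<\varepsilon'$, whence $\rho(\pi(T^nx),\pi(T^nz))<\varepsilon$ by the choice of $\varepsilon'$; since $\pi$ intertwines the dynamics, $\pi(T^nx)=S^ny$ and $\pi(T^nz)=S^ny'$, so $\rho(S^ny,S^ny')<\varepsilon$ and $n$ lies in the right-hand set. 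Because ${\mathscr F}$ is hereditary upward and $N_{T\times T}((x,z),\Delta_{\varepsilon'})\in{\mathscr F}$, it follows that $N_{S\times S}((y,y'),\Delta_{\varepsilon})\in{\mathscr F}$. As $y'\in B(y,\delta')$ and $\varepsilon>0$ were arbitrary, this shows that $y$ is an ${\mathscr F}$-equicontinuous point of $S$.

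I expect the only genuinely delicate point to be the role of openness. Without the hypothesis that $\pi$ is open at $x$, a point $y'$ arbitrarily close to $y$ might have all its $\pi$-preimages far from $x$, so the equicontinuity estimate available at $x$ could not be invoked for $y'$; openness at $x$ is precisely what guarantees $B(y,\delta')\subset\pi(B(x,\delta))$ and hence the existence of the lift $z\in B(x,\delta)$. The remaining ingredients, namely uniform continuity of $\pi$ from compactness of $X$, the intertwining relation $\pi\circ T=S\circ\pi$, and the upward heredity of the Furstenberg family, are routine and require no further hypotheses.
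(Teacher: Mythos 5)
Your proof is correct and follows essentially the same route as the paper's: uniform continuity of $\pi$ to transfer $\varepsilon$-closeness, ${\mathscr F}$-equicontinuity at $x$, openness at $x$ to lift points $y'$ near $y$ to points $z$ near $x$, and the upward heredity of ${\mathscr F}$ applied to the inclusion of hitting-time sets. If anything, you are slightly more careful than the paper, which invokes plain ``continuity'' of $\pi$ but then applies the estimate at the iterates $T^n x,\; T^n x'$, a step that really requires the uniform continuity you explicitly derive from compactness.
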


\begin{proof}
Since $\pi $ is continuous, for any $\varepsilon
>0$ there is a $\delta >0$ such that $d_{X} (x,\; x')<\delta $
implies $d_{Y} (\pi (x),\; \pi (x'))<\varepsilon $. Here $d_{X} $
and $d_{Y} $ respectively denote the metric of $X$and $Y$. And we
write
\[\Delta _{\delta }^{X} =\{ (x,\; x')\in X\times X:\; d_{X} (x,\; x')<\delta \} , \Delta _{\delta }^{Y} =\{ (y,\; y')\in Y\times Y:\; d_{Y} (y,\; y')<\delta \} .\]

And since $x$ is an ${\mathscr F}$-equi\-con\-ti\-nuo\-us point of
$T$, for the above $\delta >0$ there is a $\delta _{1} >0$ such
that for any $x'\in B(x,\; \delta _{1} )$, $F=N_{T\times T} ((x,\;
x'),\; \Delta _{\delta }^{X} )\in {\mathscr F}$.

So if $n\in F$ then $d_{X} (T^{n} x,\; T^{n} x')<\delta $ and this
implies
\[d_{Y} (\pi (T^{n} x),\; \pi (T^{n} x'))=d_{Y} (S^{n} (\pi (x)),\; S^{n} (\pi (x')))<\varepsilon .\]

Thus $N_{S\times S} ((y,\; \pi (x')),\; \Delta _{\varepsilon }^{Y}
)\supset F$ and this implies $N_{S\times S} ((y,\; \pi (x')),\;
\Delta _{\varepsilon }^{Y} )\in {\mathscr F}$.

Since $\pi $ is open at $x\in X$, $\pi (B(x,\;
\delta _{1} ))$ is a neighborhood of $y=\pi (x)$ and so there is a
$\delta _{2} >0$ with $B(y,\; \delta _{2} )\subset \pi (B(x,\;
\delta _{1} ))$.

Therefore for every $y'\in B(y,\; \delta _{2} )$, $N_{S\times S}
((y,\; y'),\; \Delta _{\varepsilon }^{Y} )\in {\mathscr F}$, that
is, $y$ is an ${\mathscr F}$-equi\-con\-ti\-nuo\-us point of $S$.
\end{proof}

%Theorem 3.2.
\begin{theorem}
Let $(X,\; T)$ and $(Y,\; S)$ be transitive dynamical systems and
$\pi :X\to Y$ be a semi-open factor map. And let ${\mathscr F}$ be
a translation invariant family. If $(X,\; T)$ is almost ${\mathscr
F}$-equi\-con\-ti\-nuo\-us then so is $(Y,\; S)$.
\end{theorem}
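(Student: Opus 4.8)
The plan is to combine Lemma 4.1 with the almost equicontinuity hypothesis and the transitivity of both systems. First I would unpack the hypothesis: since $(X,\; T)$ is almost ${\mathscr F}$-equi\-con\-ti\-nuo\-us, the set ${\rm Eq}_{{\mathscr F}}(T)$ is nonempty, and by Proposition 3.1 (applied with the translation invariant family ${\mathscr F}$) it is residual in $X$, with ${\rm Tran}(X,\; T)\subset {\rm Eq}_{{\mathscr F}}(T)$. The goal is to produce a single ${\mathscr F}$-equi\-con\-ti\-nuo\-us point of $S$ in $Y$, and Lemma 4.1 tells us that it suffices to find a point $x\in {\rm Eq}_{{\mathscr F}}(T)$ at which $\pi$ is open, since then $\pi(x)$ is ${\mathscr F}$-equi\-con\-ti\-nuo\-us for $S$.

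So the crux reduces to: \emph{some} ${\mathscr F}$-equi\-con\-ti\-nuo\-us point of $T$ is a point of openness of $\pi$. To secure this I would use the standard fact that a semi-open map between compact metric spaces (or more precisely, a semi-open factor map onto a system with a residual set of transitive points) is open on a residual subset of $X$. Concretely, the set $C_{\pi}$ of points where $\pi$ is open is residual in $X$; this is where transitivity of $(Y,\; S)$ enters, ensuring ${\rm Tran}(Y,\; S)$ is a dense $G_\delta$ in $Y$ and that $\pi$ being semi-open forces openness generically. Intersecting the two residual sets ${\rm Eq}_{{\mathscr F}}(T)$ and $C_{\pi}$ in the Baire space $X$ yields a nonempty (indeed residual) intersection, and any point $x$ in it gives the desired ${\mathscr F}$-equi\-con\-ti\-nuo\-us point $\pi(x)$ of $S$ via Lemma 4.1.

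The main obstacle I anticipate is establishing that the set of openness points $C_{\pi}$ is residual for a merely semi-open factor map; this is the only genuinely nontrivial ingredient, since everything else is assembling residual sets in a Baire space. The argument I would give: for each pair from a countable base $\{V_k\}$ of $Y$, consider the behavior of $\pi^{-1}$ and use semi-openness together with the density of ${\rm Tran}(Y,\; S)$ to show that the nonopenness points form a meager set. One must be careful that $X$ has no isolated points (guaranteed by the standing nontriviality assumption) so that the Baire category argument is not vacuous, and that $\pi$ surjective and continuous makes $\pi(X)=Y$ so that the base elements $V_k$ are genuinely hit. Once $C_\pi$ is known to be residual, the conclusion is immediate: pick $x\in {\rm Eq}_{{\mathscr F}}(T)\cap C_\pi$, apply Lemma 4.1, and conclude that $(Y,\; S)$ has an ${\mathscr F}$-equi\-con\-ti\-nuo\-us point, i.e. is almost ${\mathscr F}$-equi\-con\-ti\-nuo\-us.
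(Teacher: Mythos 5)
Your proposal follows essentially the same route as the paper: the paper likewise intersects the residual set of points where the semi-open map $\pi$ is open (citing Lemma 2.1 of Glasner [5] for that fact rather than proving it) with the residual set ${\rm Tran}(X,\; T)\subset {\rm Eq}_{{\mathscr F}} (T)$ supplied by Proposition 3.1, and then applies Lemma 4.1 to the image point. One small correction: the residuality of the openness points is a purely topological property of semi-open continuous surjections between compact metric spaces (via upper semicontinuity of the fibers and Baire category), so the transitivity of $(Y,\; S)$ that your sketch invokes plays no role in that step.
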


\begin{proof}
Since $\pi $ is semi-open, by Lemma 2.1 in [5], the set $\{ x\in
X:\; \pi $ is open at $x\} $ is residual in $X$. So we can take a
transitive point $x\in X$ such that $\pi $ is open at $x\in X$.
Since $(X,\; T)$ is almost ${\mathscr F}$-equi\-con\-ti\-nuo\-us,
by Proposition 3.1 $x\in X$ is an ${\mathscr
F}$-equi\-con\-ti\-nuo\-us point of $T$ and by Lemma 3.1 $y=\pi
(x)$ is also an ${\mathscr F}$-equi\-con\-ti\-nuo\-us point of
$S$. Thus $(Y,\; S)$ is also almost ${\mathscr
F}$-equi\-con\-ti\-nuo\-us.
\end{proof}

Let $(X,\; d_{X} )$ and $(Y,\; d_{Y} )$ be the metric spaces. The
metric on the product space $X\times Y$ is defined by $d((x,\;
y),\; (x',\; y'))=\sqrt{(d_{X} (x,\; x'))^{2} +(d_{Y} (y,\;
y'))^{2} } $. Then the following lemma holds.

%Lemma 3.2.
\begin{lemma}
Let $(X,\; d)$ be a compact metric space and $U$ be a nonempty
open subset of $X\times X$. Let $\Delta ^{X} $ be a diagonal of
$X\times X$, that is, $\Delta ^{X} =\{ (x,\; x):\; x\in X\} $. If
$\Delta ^{X} \subset U$ then there exists a $\delta >0$ such that

\[\Delta _{\delta }^{X} =\{ (x,\; y)\in X\times X:\; d_{X} (x,\; y)<\delta \} \subset U.\]
\end{lemma}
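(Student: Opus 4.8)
The plan is to reduce the statement to the elementary fact that a continuous, strictly positive function on a nonempty compact set is bounded below by a positive constant, and to apply this to the distance function on the complement of $U$. First I would pass to $K=(X\times X)\setminus U$. Since $U$ is open in $X\times X$ and $X\times X$ is compact (being a product of the compact space $X$ with itself), the set $K$ is closed, hence compact. If $K=\emptyset$ then $U=X\times X$ and any $\delta>0$ suffices, so I may assume $K\neq\emptyset$.

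Next I would consider the distance function $\rho:X\times X\to[0,\infty)$ given by $\rho(x,y)=d(x,y)$, which is continuous. Restricting $\rho$ to the compact set $K$, it attains its infimum at some point $(x_{0},\; y_{0})\in K$. The key observation is that $\rho$ is strictly positive on $K$: if $\rho(x,y)=0$ then $x=y$, i.e.\ $(x,y)\in\Delta^{X}$, but $\Delta^{X}\subset U$ is disjoint from $K$; hence no point of $K$ lies on the diagonal. Therefore $\delta:=\rho(x_{0},\; y_{0})=\min_{(x,y)\in K}d(x,y)>0$. This $\delta$ is the required constant: if $(x,y)\in\Delta_{\delta}^{X}$, that is $d(x,y)<\delta$, then $(x,y)\notin K$ because $\rho\ge\delta$ throughout $K$, so $(x,y)\in U$, giving $\Delta_{\delta}^{X}\subset U$.

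The only point requiring care, which I would flag as the main (if mild) obstacle, is the strict positivity of $\delta$: it rests precisely on the hypothesis $\Delta^{X}\subset U$, which forces the compact set $K$ to stay a positive distance from the diagonal. An equivalent route, avoiding the complement, would cover the diagonal directly: for each $x\in X$ choose $r_{x}>0$ with $B(x,\; r_{x})\times B(x,\; r_{x})\subset U$ (possible since $(x,x)\in U$ and $U$ is open), extract a finite subcover $B(x_{1},\; r_{x_{1}}/2),\dots,B(x_{n},\; r_{x_{n}}/2)$ of $X$, and set $\delta=\min_{i} r_{x_{i}}/2$; a triangle-inequality check then shows that $d(x,y)<\delta$ places both coordinates in a common ball $B(x_{i},\; r_{x_{i}})$, whence $(x,y)\in U$. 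Either way the essential input is the compactness of $X$.
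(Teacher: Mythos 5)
Your first argument is correct and is essentially the paper's proof: both rest on the same compactness fact, namely that the diagonal $\Delta ^{X} $ and the closed (hence compact) set $(X\times X)\backslash U$ must lie a positive distance apart. The paper does the bookkeeping by fattening the diagonal --- it takes $\delta >0$ with $B(\Delta ^{X} ,\; \delta )\subset U$ and then uses $d_{X} (x,\; y)=d((x,\; y),\; (y,\; y))\ge d((x,\; y),\; \Delta ^{X} )$ --- while you minimize $d_{X} $ directly over the complement (and your finite-subcover variant is the same compactness input in covering form), so all of these are the one argument in slightly different dress.
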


\begin{proof} For every $(x,\; y)\in X\times X$, clearly
$d((x,\; y),\; \Delta ^{X} )=d((y,\; x),\; \Delta ^{X} )$. Since
$\Delta ^{X} $ is closed in $X\times X$, there is a $\delta
>0$ such that
\[B(\Delta ^{X} ,\; \delta )=\{ (x,\; y)\in X\times X:\; d((x,\; y),\; \Delta ^{X} )<\delta \} \subset U.\]

If $(x,\; y)\in \Delta _{\delta }^{X} $ then $\delta >d_{X} (x,\;
y)=d((x,\; y),\; (y,\; y))\ge d((x,\; y),\; \Delta ^{X} )$ and
this implies $(x,\; y)\in B(\Delta ^{X} ,\; \delta )$. Therefore
$\Delta _{\delta }^{X} \subset B(\Delta ^{X} ,\; \delta )\subset
U$.
\end{proof}

%Theorem 3.3
\begin{theorem}
Let $(X,\; T)$ and $(Y,\; S)$ be topological dynamical systems and
$\pi :X\to Y$ be an open factor map. And let ${\mathscr F}$ be a
Furstenberg family. If $(X,\; T)$ is ${\mathscr
F}$-equi\-con\-ti\-nuo\-us then so is $(Y,\; S)$.
\end{theorem}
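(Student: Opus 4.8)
The plan is to verify the $\varepsilon$-$\delta$ condition of Definition 2.1 for $(Y,\;S)$ directly, reducing the problem to lifting pairs of nearby points in $Y$ to pairs of nearby points in $X$, where the hypothesis on $(X,\;T)$ can be applied and then pushed forward. So first I would fix $\varepsilon>0$. By uniform continuity of $\pi$ on the compact space $X$, choose $\delta>0$ so that $d_X(x,\;x')<\delta$ implies $d_Y(\pi(x),\;\pi(x'))<\varepsilon$. Then, invoking the $\mathscr{F}$-equi\-con\-ti\-nui\-ty of $(X,\;T)$, obtain $\delta_1>0$ such that $d_X(x,\;x')<\delta_1$ forces $N_{T\times T}((x,\;x'),\;\Delta_\delta^X)\in\mathscr{F}$. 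The remaining task is to produce a single $\delta'>0$ so that every pair $y,\;y'\in Y$ with $d_Y(y,\;y')<\delta'$ admits preimages $x,\;x'$ with $\pi(x)=y$, $\pi(x')=y'$ and $d_X(x,\;x')<\delta_1$.

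This uniform lifting step is where openness of $\pi$ and Lemma 4.1 enter, and it is the crux of the argument. Since $\pi$ is open, the product map $\pi\times\pi:X\times X\to Y\times Y$ is open as well. The tube $\Delta_{\delta_1}^X$ is an open neighborhood of the diagonal $\Delta^X$, so its image $(\pi\times\pi)(\Delta_{\delta_1}^X)$ is open in $Y\times Y$; moreover, because $\pi$ is surjective, for each $y\in Y$ we may pick $x$ with $\pi(x)=y$ and note $(\pi\times\pi)(x,\;x)=(y,\;y)$, so this image contains the diagonal $\Delta^Y$. Applying Lemma 4.1 to the compact space $Y$ then yields $\delta'>0$ with $\Delta_{\delta'}^Y\subset(\pi\times\pi)(\Delta_{\delta_1}^X)$, which is exactly the uniform lifting I need.

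To finish, take any $y,\;y'\in Y$ with $d_Y(y,\;y')<\delta'$. Then $(y,\;y')\in\Delta_{\delta'}^Y\subset(\pi\times\pi)(\Delta_{\delta_1}^X)$, so there are $x,\;x'\in X$ with $\pi(x)=y$, $\pi(x')=y'$ and $d_X(x,\;x')<\delta_1$. By the choice of $\delta_1$, $N_{T\times T}((x,\;x'),\;\Delta_\delta^X)\in\mathscr{F}$. For each $n$ in this set, $d_X(T^nx,\;T^nx')<\delta$ together with the choice of $\delta$ gives $d_Y(S^ny,\;S^ny')=d_Y(\pi(T^nx),\;\pi(T^nx'))<\varepsilon$ via the semiconjugacy $\pi(T^nx)=S^n(\pi(x))$; hence $N_{T\times T}((x,\;x'),\;\Delta_\delta^X)\subset N_{S\times S}((y,\;y'),\;\Delta_\varepsilon^Y)$. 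Since $\mathscr{F}$ is upward closed, $N_{S\times S}((y,\;y'),\;\Delta_\varepsilon^Y)\in\mathscr{F}$, establishing the $\mathscr{F}$-equi\-con\-ti\-nui\-ty of $(Y,\;S)$.

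The main obstacle is precisely this uniform lifting of nearby pairs, and I expect the clean route to be exactly the combination above: openness of $\pi$ (hence of $\pi\times\pi$) guarantees that the image of the diagonal tube is open and contains $\Delta^Y$, while Lemma 4.1 converts this open neighborhood of $\Delta^Y$ into a single uniform radius $\delta'$. I would note that no filter or Ramsey-type hypothesis on $\mathscr{F}$ is required here; only the defining upward-closure of a Furstenberg family is used, and only at the final step, which is why the statement holds for an arbitrary Furstenberg family $\mathscr{F}$.
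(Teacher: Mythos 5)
Your proof is correct and follows essentially the same route as the paper: uniform continuity of $\pi$, ${\mathscr F}$-equi\-con\-ti\-nui\-ty upstairs, openness of $\pi \times \pi$ combined with the diagonal tube lemma to lift nearby pairs in $Y$ to $\delta_1$-close pairs in $X$, and then heredity (upward closure) of ${\mathscr F}$ to push membership forward. Your closing remark that no filter or Ramsey hypothesis is needed is also consistent with the paper, which states the theorem for an arbitrary Furstenberg family.
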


\begin{proof}
Since $\pi $ is continuous, for every $\varepsilon
>0$ there is a $\delta >0$ such that $d_{X} (x,\; x')<\delta $
implies $d_{Y} (\pi (x),\; \pi (x'))<\varepsilon $. For the above
$\delta>0$, since $(X,\; T)$ is ${\mathscr
F}$-equi\-con\-ti\-nuo\-us, there exists a $\delta _{1} >0$ such
that $d_{X} (x,\; x')<\delta _{1} $ implies $F=N_{T\times T}
((x,\; x'),\; \Delta _{\delta }^{X} )\in {\mathscr F}$. So if
$n\in F$ then $d_{X} (T^{n} x,\; T^{n} x')<\delta $ and this
implies

\[d_{Y} (\pi (T^{n} x),\; \pi (T^{n} x'))=d_{Y} (S^{n}
(\pi (x)),\; S^{n} (\pi (x')))<\varepsilon.\]

Thus $N_{S\times S} ((\pi (x),\; \pi (x')),\; \Delta _{\varepsilon
}^{Y} )\supset F$.

Since $\pi $ is open, $\pi \times \pi :X\times X\to Y\times Y$ is
also open. So $\pi \times \pi (\Delta _{\delta _{1} }^{X} )$ is
open in $Y\times Y$ and contains a diagonal of $Y\times Y$, that
is, $\pi \times \pi (\Delta _{\delta _{1} }^{X} )\supset \Delta
^{Y} $. Then by Lemma 3.2 there exists a $\delta _{2} >0$ such
that $\Delta _{\delta _{2} }^{Y} \subset \pi \times \pi (\Delta
_{\delta _{1} }^{X} )$.

Thus for every $(y,\; y')\in \Delta _{\delta _{2} }^{Y} $ there
exists a pair $(x,\; x')\in \Delta _{\delta _{1} }^{X} $ such that
$y=\pi (x),\; y'=\pi (x')$. Since $d_{X} (x,\; x')<\delta _{1} $,
$N_{S\times S} ((y,\; y'),\; \Delta _{\varepsilon }^{Y} )\supset
F$ and this implies

\[N_{S\times S} ((y,\; y'),\; \Delta _{\varepsilon }^{Y} )\in
{\mathscr F}.\]

Therefore $(Y,\; S)$ is also ${\mathscr
F}$-equi\-con\-ti\-nuo\-us.
\end{proof}

%Lemma 4.1.

Following lemma 4.3 and 4.4  show some implications between mean equicontinuity and  ${\mathscr
F}$-equi\-con\-ti\-nuity.

\begin{lemma}
If $(X,\; T)$ is mean equi\-con\-ti\-nuo\-us then for any $a\in
(0,\; 1)$, it is $k\overline{D}(a+)$-equi\-con\-ti\-nuo\-us. Also
if $(X,\; T)$ is  almost mean equi\-con\-ti\-nuo\-us, then
for any $a\in (0,\; 1)$, it is almost
$k\overline{D}(a+)$-equi\-con\-ti\-nuo\-us.
\end{lemma}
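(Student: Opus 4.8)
The plan is to unwind the dual-family condition and reduce the whole statement to a single Chebyshev-type estimate on the Cesàro averages that define mean equicontinuity.

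First I would record what membership in $k\overline{D}(a+)$ actually means. By definition $F\in k\overline{D}(a+)$ exactly when $\mathbb{Z}_{+}\setminus F\notin\overline{D}(a+)$, and since $\overline{D}(a+)=\{G\in\mathscr{B}:\overline{D}(G)>a\}$, a set $G$ fails to belong to $\overline{D}(a+)$ precisely when $\overline{D}(G)\le a$ (a finite $G$ has $\overline{D}(G)=0$ and so also qualifies). Applying this to $F=N_{T\times T}((x,y),\Delta_{\varepsilon})=\{n:d(T^{n}x,T^{n}y)<\varepsilon\}$, whose complement is $\{n:d(T^{n}x,T^{n}y)\ge\varepsilon\}$, the requirement $N_{T\times T}((x,y),\Delta_{\varepsilon})\in k\overline{D}(a+)$ becomes simply
\[\overline{D}(\{n\in\mathbb{Z}_{+}:\,d(T^{n}x,T^{n}y)\ge\varepsilon\})\le a.\]
Thus $k\overline{D}(a+)$-equicontinuity is the assertion that the upper density of the $\varepsilon$-separated times can be pushed below $a$ by taking $x,y$ close.

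The key step is a Chebyshev (Markov) inequality connecting this density to the Cesàro average. For fixed $n$, writing $E_{n}=\{0\le i<n:\,d(T^{i}x,T^{i}y)\ge\varepsilon\}$, nonnegativity of the summands gives
\[\frac{1}{n}\sum_{i=0}^{n-1}d(T^{i}x,T^{i}y)\ \ge\ \frac{1}{n}\sum_{i\in E_{n}}d(T^{i}x,T^{i}y)\ \ge\ \varepsilon\cdot\frac{\#E_{n}}{n},\]
so that, passing to $\limsup_{n\to\infty}$,
\[\overline{D}(\{i:\,d(T^{i}x,T^{i}y)\ge\varepsilon\})\ \le\ \frac{1}{\varepsilon}\,\limsup_{n\to\infty}\frac{1}{n}\sum_{i=0}^{n-1}d(T^{i}x,T^{i}y).\]
Now I would exploit the free tolerance in the definition of mean equicontinuity: given $\varepsilon>0$ and $a\in(0,1)$, I apply the definition with $a\varepsilon$ as the prescribed bound to obtain a $\delta>0$ such that $d(x,y)<\delta$ forces $\limsup_{n}\frac{1}{n}\sum_{i=0}^{n-1}d(T^{i}x,T^{i}y)<a\varepsilon$. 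Combined with the displayed inequality this yields $\overline{D}(\{i:d(T^{i}x,T^{i}y)\ge\varepsilon\})<a$, which is exactly the reformulated condition, proving the first assertion.

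For the second assertion I would run the identical estimate pointwise. If $x$ is a mean equicontinuous point, then for $\varepsilon>0$ and $a\in(0,1)$ the definition supplies a $\delta>0$ with $\limsup_{n}\frac{1}{n}\sum_{i=0}^{n-1}d(T^{i}x,T^{i}y)<a\varepsilon$ for every $y\in B(x,\delta)$; the Chebyshev step then gives $N_{T\times T}((x,y),\Delta_{\varepsilon})\in k\overline{D}(a+)$ for all such $y$, so $x$ is a $k\overline{D}(a+)$-equicontinuous point and the system is almost $k\overline{D}(a+)$-equicontinuous. I do not anticipate a genuine obstacle; the only points demanding care are the correct translation of membership in $k\overline{D}(a+)$ into the plain upper-density bound $\overline{D}(\mathbb{Z}_{+}\setminus F)\le a$ (noting that a finite complement satisfies this automatically), and the bookkeeping of the factor $a\varepsilon$ so that the resulting strict inequality lands below the threshold $a$.
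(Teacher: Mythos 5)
Your proposal is correct and rests on exactly the same Chebyshev-type estimate as the paper's proof: bounding the Ces\`aro average from below by $\varepsilon\cdot\overline{D}(\{n: d(T^{n}x,T^{n}y)\ge\varepsilon\})$, with the same rescaling of tolerances (your $a\varepsilon$ plays the role of the paper's $\varepsilon_{0}$ with separation threshold $\varepsilon_{0}/a$). The only difference is organizational --- you argue directly while the paper argues by contradiction, i.e.\ via the contrapositive of your inequality --- and your pointwise treatment of the ``almost'' case is precisely what the paper leaves implicit with ``the proof of the second part is similar.''
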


\begin{proof}
Assume that there exists an $a\in (0,\; 1)$ such that $(X,\; T)$
is not $k\overline{D}(a+)$-equi\-con\-ti\-nuo\-us. Then there is
$\varepsilon _{0} >0$ such that for any $\delta >0$, there exist
$x,\; y\in X$ with $d(x,\; y)<\delta $ such that $N_{T\times T}
((x,\; y),\; \Delta _{\varepsilon _{0} /a} )\notin
k\overline{D}(a+)$. So

\begin{center}
${\mathbb Z}_{+} \backslash N_{T\times T} ((x,\; y),\; \Delta
_{\varepsilon _{0} /a} )=\{ n\in {\mathbb Z}_{+} :\; d(T^{n} x,\;
T^{n} y)\ge \varepsilon _{0} /a\} \in \overline{D}(a+)$.
\end{center}

Set $F=\{ n\in {\mathbb Z}_{+} :\; d(T^{n} x,\; T^{n} y)\ge
\varepsilon _{0} /a\} $, and then
\[\begin{array}{l} {{\mathop{\lim \sup }\limits_{n\to \infty }} \frac{1}{n} \sum _{i=0}^{n-1}d(T^{i} x,\; T^{i}
y)=} \\ {\; \; \; \; \; \; \; \; \; \; \; \; \; \; \; \; \; \; =
{\mathop{\lim \sup }\limits_{n\to \infty }} \frac{1}{n}
\left({\mathop{\sum}\limits_{i\in [0,\; n-1]\cap F}}d(T^{i} x,\;
T^{i} y) +{\mathop{\sum}\limits_{i\in [0,\; n-1]\cap F^{C}} }
d(T^{i} x,\; T^{i} y) \right)} \\ {\; \; \; \; \; \; \; \; \; \;
\; \; \; \; \; \; \; \; \ge {\mathop{\lim \sup }\limits_{n\to
\infty }} \frac{\# ([0,\; n-1]\cap F)}{n} \cdot \frac{\varepsilon
_{0} }{a} \ge \varepsilon _{0} .} \end{array}\]

This contradicts to the mean equi\-con\-ti\-nui\-ty of $(X,\; T)$.
The proof of second part is similar to this. 

\end{proof}

%Lemma 4.2.
\begin{lemma}
If $(X,\; T)$ is $k\overline{D}(0+)$-equi\-con\-ti\-nuo\-us then
it is mean equi\-con\-ti\-nuo\-us. Also if $(X,\; T)$ is almost 
$k\overline{D}(0+)$-equi\-con\-ti\-nuo\-us then it is almost 
mean equi\-con\-ti\-nuo\-us.
\end{lemma}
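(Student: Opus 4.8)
The plan is to run the argument of Lemma 4.3 in reverse, the only new ingredient being that a density-zero ``bad set'' contributes nothing to the Ces\`aro average. First I would unwind what the hypothesis says concretely. By the definition of the dual family, $F\in k\overline{D}(0+)$ holds exactly when $\mathbb{Z}_{+}\backslash F\notin\overline{D}(0+)$, i.e.\ when $\overline{D}(\mathbb{Z}_{+}\backslash F)=0$. Applying this with $F=N_{T\times T}((x,\;y),\;\Delta_{\varepsilon})$, whose complement is $\{n\in\mathbb{Z}_{+}:\;d(T^{n}x,\;T^{n}y)\ge\varepsilon\}$, the $k\overline{D}(0+)$-equi\-con\-ti\-nui\-ty hypothesis translates into: for every $\varepsilon>0$ there is a $\delta>0$ so that $d(x,\;y)<\delta$ forces the set $\{n:\;d(T^{n}x,\;T^{n}y)\ge\varepsilon\}$ to have upper density $0$.

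Second, given a target $\varepsilon>0$ for mean equi\-con\-ti\-nui\-ty, I would apply the hypothesis to $\varepsilon/2$, obtaining $\delta>0$ such that $d(x,\;y)<\delta$ implies $\overline{D}(F)=0$, where $F:=\{n:\;d(T^{n}x,\;T^{n}y)\ge\varepsilon/2\}$. Writing $M:=\mathrm{diam}(X)$, which is finite by compactness, I split the Ces\`aro sum over $[0,\;n-1]$ into the indices in $F$, where each summand is at most $M$, and those in $F^{C}$, where each summand is below $\varepsilon/2$. This gives
\[\frac{1}{n}\sum_{i=0}^{n-1}d(T^{i}x,\;T^{i}y)\le\frac{\#([0,\;n-1]\cap F)}{n}\cdot M+\frac{\varepsilon}{2}.\]
Taking $\limsup_{n\to\infty}$ and using $\overline{D}(F)=0$, the first term vanishes, so the $\limsup$ is $\le\varepsilon/2<\varepsilon$, which is exactly mean equi\-con\-ti\-nui\-ty.

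Third, for the ``almost'' statement I would repeat the same estimate pointwise: fix a $k\overline{D}(0+)$-equi\-con\-ti\-nuo\-us point $x$, apply its defining property with $\varepsilon/2$ to obtain a $\delta$ valid for all $y\in B(x,\;\delta)$, and the identical split-and-bound computation shows that $x$ is a mean equi\-con\-ti\-nuo\-us point; hence $(X,\;T)$ is almost mean equi\-con\-ti\-nuo\-us.

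I do not expect a genuine obstacle here. The whole content is the observation that controlling the upper density of the bad set, together with the finiteness of the diameter, suffices to control the average. The only points needing a little care are the inequality $\limsup(a_{n}+b_{n})\le\limsup a_{n}+\limsup b_{n}$ and the fact that a density-zero weight against the bounded quantity $M$ contributes nothing in the limit; neither is delicate.
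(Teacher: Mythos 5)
Your proof is correct, but it finishes differently from the paper's. Both arguments begin with the same unwinding of the dual family: $k\overline{D}(0+)$-equi\-con\-ti\-nui\-ty means that for every $\varepsilon>0$ there is a $\delta>0$ such that $d(x,\;y)<\delta$ forces $\overline{D}(\{n\in\mathbb{Z}_{+}:\;d(T^{n}x,\;T^{n}y)\ge\varepsilon\})=0$. At that point the paper stops computing: it observes that $0<\varepsilon$, so $(X,\;T)$ is mean-L-stable in the sense of Definition 2.3, and then quotes Lemma 3.1 of [8] (Li--Tu--Ye), which says that mean-L-stable systems are mean equi\-con\-ti\-nuo\-us; the ``almost'' statement is dismissed with ``the proof of the second part is similar''. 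You instead prove the implication from scratch via the split Ces\`aro estimate $\frac{1}{n}\sum_{i=0}^{n-1}d(T^{i}x,\;T^{i}y)\le\frac{\#([0,\;n-1]\cap F)}{n}\cdot\mathrm{diam}(X)+\varepsilon/2$, i.e.\ you run the computation of Lemma 4.3 in reverse. What each approach buys: the paper's route is shorter and ties the notion to the established equivalence in [8], while yours is elementary and self-contained; moreover, your estimate localizes at a single point with no extra work, so the pointwise (``almost'') case is genuinely proved rather than asserted---a real advantage, since the paper's detour through the global property of mean-L-stability does not transfer verbatim to one equi\-con\-ti\-nuo\-us point without a local version of the cited lemma. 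Your cautionary remarks about $\limsup$ subadditivity and the finite diameter are exactly the right (and only) points of care.
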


\begin{proof}
Since $(X,\; T)$ is $k\overline{D}(0+)$-equi\-con\-ti\-nuo\-us,
for every $\varepsilon >0$ there exists a $\delta >0$ such that
whenever $x,\; y\in X$ with $d(x,\; y)<\delta $, $N_{T\times T}
((x,\; y),\; \Delta _{\varepsilon } )\in k\overline{D}(0+)$, that
is,
\[\{ n\in {\mathbb Z}_{+} :\; d(T^{n} x,\; T^{n} y)\ge \varepsilon \} ={\mathbb Z}_{+} \backslash N_{T\times T} ((x,\; y),\; \Delta _{\varepsilon } )\notin \overline{D}(0+).\]

So $\overline{D}(\{ n\in {\mathbb Z}_{+} :\; d(T^{n} x,\; T^{n}
y)\ge \varepsilon \} )=0<\varepsilon $, that is, $(X,\; T)$ is
mean-L-stable.

Thus by Lemma 3.1 in [8], $(X,\; T)$ is mean
equi\-con\-ti\-nuo\-us. The proof of second part is similar to this. 
\end{proof}

Following lemma shows an  implication between mean sesitivity and  ${\mathscr
F}$-sesitivity. 
%Lemma 4.3.
\begin{lemma}
If $(X,\; T)$ is mean sensitive with sensitive constant $\delta
>0$ then for any $a\in \left[0,\; \frac{\delta }{{\rm diam}(X)}
\right)$, it is $\overline{D}(a+)$-sensitive.
\end{lemma}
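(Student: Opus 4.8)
The plan is to run essentially the same density splitting that appears in the proof of Lemma 4.3, but to read the resulting inequality in the direction that produces a \emph{lower} bound on the upper density of the set of separated times. Write $D = \mathrm{diam}(X)$, fix $a \in [0,\, \delta/D)$, and set $\varepsilon = \delta - aD$; since $a < \delta/D$ we have $\varepsilon > 0$, and this $\varepsilon$ — depending only on $\delta$, $a$ and $D$, and not on any open set — will serve as the $\overline{D}(a+)$-sensitive constant.

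First I would reduce the problem to a statement about two orbits. Let $U$ be an arbitrary nonempty open subset of $X$ and pick any $x \in U$. Applying mean sensitivity with the neighborhood $U_x = U$ yields a point $y \in U$ with $\limsup_{n\to\infty} \frac1n \sum_{i=0}^{n-1} d(T^i x,\, T^i y) > \delta$. Since $x,\, y \in U$, for every $n$ with $d(T^n x,\, T^n y) > \varepsilon$ we have $\mathrm{diam}\, T^n(U) \ge d(T^n x,\, T^n y) > \varepsilon$; hence $F := \{ n \in \mathbb{Z}_+ :\; d(T^n x,\, T^n y) > \varepsilon \} \subseteq S_T(U,\, \varepsilon)$. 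By monotonicity of $\overline{D}$ it therefore suffices to show $\overline{D}(F) > a$.

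The core estimate splits the Cesàro average of $d(T^i x,\, T^i y)$ over $F$ and its complement. On $F$ each term is at most $D$, while off $F$ each term is at most $\varepsilon$, so
\[\frac1n \sum_{i=0}^{n-1} d(T^i x,\, T^i y) \le D \cdot \frac{\#([0,\, n-1] \cap F)}{n} + \varepsilon .\]
Taking $\limsup_{n\to\infty}$ (the constant $\varepsilon$ passes through unchanged and the positive factor $D$ scales the $\limsup$) gives $\delta < D\,\overline{D}(F) + \varepsilon$, whence $\overline{D}(F) > (\delta - \varepsilon)/D = a$. In particular $\overline{D}(F) > a \ge 0$, so $F$ is infinite and $F \in \overline{D}(a+)$; together with $F \subseteq S_T(U,\, \varepsilon)$ this yields $S_T(U,\, \varepsilon) \in \overline{D}(a+)$. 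As $U$ was arbitrary and $\varepsilon$ uniform, $(X,\, T)$ is $\overline{D}(a+)$-sensitive.

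There is no deep obstacle here — the argument is the mirror image of Lemma 4.3. The only points requiring care are that the chosen $\varepsilon = \delta - aD$ is a single constant valid for all open sets $U$ (so that the $\overline{D}(a+)$-sensitive constant is genuinely well defined), and that the $\limsup$ manipulation is legitimate: the inequality holds pointwise in $n$, the added $\varepsilon$ is constant and the multiplier $D > 0$, so the $\limsup$ of the right-hand side equals $D\,\overline{D}(F) + \varepsilon$ exactly. Checking $D > 0$, i.e. that $X$ is nondegenerate — guaranteed by the standing nontriviality hypothesis that $X$ has no isolated points — is precisely what makes $\varepsilon$ positive and the final division by $D$ valid.
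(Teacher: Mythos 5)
Your proof is correct and follows essentially the same argument as the paper: the same sensitive constant $\varepsilon=\delta-a\cdot\mathrm{diam}(X)$ and the same splitting of the Ces\`aro average into times of large and small separation, yielding $\delta\le\mathrm{diam}(X)\cdot\overline{D}(F)+\varepsilon$. The only cosmetic difference is that you split over the separation-time set of the pair $(x,y)$ and then pass to $S_T(U,\varepsilon)$ by inclusion, whereas the paper splits over $S_T(U,\varepsilon)$ directly; the estimates are identical.
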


\begin{proof}
For any $a\in \left[0,\; \frac{\delta }{{\rm diam}(X)} \right)$,
set $\delta '=\delta -a\cdot {\rm diam}(X)>0$. Now we are going to
show that $(X,\; T)$ is $\overline{D}(a+)$-senstive with sensitive
constant $\delta '>0$.

Fix any nonempty open subset of $X$ and set $F=S_{T} (U,\; \delta
')$. We choose $x\in U$ and then by the mean continuity of $(X,\;
T)$, there exists $y\in U$ such that

\begin{center}
${\mathop{\lim \sup }\limits_{n\to \infty }} \frac{1}{n} \sum
_{i=0}^{n-1}d(T^{i} x,\; T^{i} y)
>\delta $.
\end{center}

So

\[\begin{array}{l} {{\mathop{\delta <\lim \sup }\limits_{n\to \infty }} \frac{1}{n} \sum _{i=0}^{n-1}d(T^{i} x,\; T^{i} y) } \\
{\; \; ={\mathop{\lim \sup }\limits_{n\to \infty }} \frac{1}{n}
\left({\mathop{\sum}\limits_{i\in [0,\; n-1]\cap F}}d(T^{i} x,\;
T^{i} y) +{\mathop{\sum}\limits_{i\in [0,\; n-1]\cap F^{C}}}
d(T^{i} x,\; T^{i} y) \right)} \\ {\; \; \le {\mathop{\lim \sup
}\limits_{n\to \infty }} \frac{\# ([0,\; n-1]\cap F)}{n} \cdot
{\rm diam}(X)+\delta '}
\\ {\; \; =\overline{D}(F)\cdot {\rm diam}(X)+\delta '.}
\end{array}\]

Therefore $\overline{D}(F)>\frac{\delta -\delta '}{{\rm diam}(X)}
=a$, that is $F=S_{T} (U,\; \delta ')\in \overline{D}(a+)$.
\end{proof}

As a consequence of the above consideration, the following proposition holds. It is immediately followed by Theorem 5.4 in [8].
\begin{proposition}
Let $(X,\; T)$ be a topological dynamical system. If $(X,\; T)$ is
transitive, then there is an $a>0$ such that $(X,\; T)$ is either
$\overline{D}(a+)$-sensitive or
$k\overline{D}(a+)$-equi\-con\-ti\-nuo\-us.
\end{proposition}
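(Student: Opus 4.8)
The plan is to derive the statement directly from the mean-sensitivity versus mean-equicontinuity dichotomy of Theorem 5.4 in [8], translating each of its two branches into the density-family language via Lemmas 4.5 and 4.3 respectively. I would treat Theorem 5.4 in [8] as the sole external input: a transitive system is either mean sensitive or mean equicontinuous. In each branch separately I would then exhibit a single $a>0$ witnessing the matching alternative of the proposition. Since the system is nontrivial and compact, $0<{\rm diam}(X)<\infty$, and this is exactly what keeps the admissible range of $a$ in the sensitive branch a nonempty interval of positive reals.

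First I would treat the mean sensitive branch. If $(X,\;T)$ is mean sensitive with sensitive constant $\delta>0$, then Lemma 4.5 gives that $(X,\;T)$ is $\overline{D}(a+)$-sensitive for every $a\in\left[0,\;\frac{\delta}{{\rm diam}(X)}\right)$. As $\delta>0$ and ${\rm diam}(X)<\infty$, this interval contains positive numbers, so choosing any such $a$ realizes the first alternative.

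Next I would treat the mean equicontinuous branch. If $(X,\;T)$ is mean equicontinuous, then the first (full) part of Lemma 4.3 gives that $(X,\;T)$ is $k\overline{D}(a+)$-equicontinuous for every $a\in(0,\;1)$; taking for instance $a=\frac{1}{2}$ realizes the second alternative. The essential point is that feeding the full mean-equicontinuity conclusion of Theorem 5.4 in [8] into the full half of Lemma 4.3 yields genuine $k\overline{D}(a+)$-equicontinuity; invoking instead the transitive ``almost mean equicontinuous'' dichotomy together with the almost half of Lemma 4.3 would only produce almost $k\overline{D}(a+)$-equicontinuity, which is weaker than what is asserted.

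Combining the two branches, every transitive $(X,\;T)$ admits an $a>0$ for which one of the two alternatives holds, as claimed. Note that no single value of $a$ need serve both alternatives at once: one first decides which branch of the dichotomy the system lies in and only then selects $a$, so the existential quantifier is discharged branch by branch. The proof is short given these inputs, and the only genuine care required --- the point on which the weaker reading founders --- is to pair Theorem 5.4 in [8] with the full, rather than the almost, half of Lemma 4.3.
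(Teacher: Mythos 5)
Your overall architecture---feed the dichotomy of Theorem 5.4 in [8] into Lemma 4.5 on the sensitive side and Lemma 4.3 on the equicontinuous side---is exactly the route the paper intends: its entire proof is the remark that the proposition ``is immediately followed by Theorem 5.4 in [8]'' together with the preceding lemmas. The gap is in your external input. Reference [8] does \emph{not} prove that a transitive system is either mean sensitive or mean equicontinuous. As this paper itself recounts in the introduction, the dichotomy of [8] for \emph{transitive} systems reads: either mean sensitive or \emph{almost} mean equicontinuous (i.e.\ possessing at least one mean equicontinuous point); the full mean-equicontinuity alternative is obtained only under minimality. Your strengthened quotation is in fact false as a statement about transitive systems: [8] constructs transitive systems that are almost mean equicontinuous but not mean equicontinuous (there are even sensitive such examples), and by the genuine dichotomy these are not mean sensitive either, so they satisfy neither alternative of the statement you take as your sole input.

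Consequently your equicontinuous branch fails at precisely the point you flagged. With the correct input, the non-sensitive alternative delivers only almost mean equicontinuity, and then the second (``almost'') half of Lemma 4.3 yields almost $k\overline{D}(a+)$-equi\-con\-ti\-nui\-ty---the pairing you rejected as too weak is the only one available for merely transitive systems, and it is visibly the one the paper has in mind, since otherwise the ``almost'' halves of Lemmas 4.3 and 4.4 would serve no purpose. Nor can you upgrade ``almost'' to full equicontinuity through Proposition 3.1 or Lemma 2.2: that upgrade requires minimality together with $k\overline{D}(a+)$ being a filter, and $\overline{D}(a+)$ does not have the Ramsey property (a union of two sets of upper density $\le a$ can have upper density $>a$), so neither hypothesis is available. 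Your sensitive branch is fine (Lemma 4.5 with any $a\in\left(0,\;\frac{\delta}{{\rm diam}(X)}\right)$, which is nonempty since $X$ is nontrivial and compact), and your branch-by-branch handling of the existential quantifier over $a$ is correct; but for the transitive case the proposition can honestly be established only with ``almost $k\overline{D}(a+)$-equi\-con\-ti\-nuo\-us'' in the second alternative (or with full equicontinuity under the additional hypothesis of minimality, via the minimal dichotomy of [8] and the first half of Lemma 4.3). You correctly sensed the tension between the almost and full versions, but resolved it by strengthening the cited theorem rather than weakening the conclusion, and the strengthened theorem is unavailable.
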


\vskip0.5cm\noindent {\bf References}
\small\vskip0.4cm
\begin{itemize}

\item[{[1]}] E. Akin, \textit{Recurrence in topological dynamics,
Furstenberg families and Ellis actions}, New York, Plenum Press;
1997.

\item[{[2]}] E. Akin, J. Auslander, K. Berg, \textit{When is a
transitive map chaotic?}, Convergence in ergodic theory and
probability (Columbus, OH, 1993), Ohio State Univ. Math. Res.
Inst. Publ., 5(1996) 25--40.

\item[{[3]}] E. Akin, S. Kolyada, \textit{Li--Yorke sensitivity},
Nonlinearity 16(2003) 1421--1433.

\item[{[4]}] J. Auslander, J. Yorke, \textit{Interval maps,
factors of maps, and chaos}, T\^{}hoku Math. J. (2) 32 (1980), no.
2, 177--188.

\item[{[5]}] E. Glasner, \textit{The structure of tame minimal
dynamical systems}. Ergod. Th. \& Dynam. Sys. 27 (2007)
1819--1837.

\item[{[6]}] W. Huang, D. Khilko, S. Kolyada, G. Zhang,
\textit{Dynamical compactness and sensitivity}, J.Differ.
Equations. 260(2016) 6800--6827.

\item[{[7]}] W. Huang, S. Kolyada, G. Zhang, \textit{Analogues of
Auslander-Yorke theorems for multi-sensitivity},
arXiv:1509.08818v12 [math.DS] 20 May 2016

\item[{[8]}] J. Li, S. Tu, X. Ye, \textit{Mean
equi\-con\-ti\-nui\-ty and mean sensitivity}, Ergod. Th. \& Dynam.
Sys. 35(2015) 2587 - 2612

\item[{[9]}] T. Li, J. Yorke, \textit{Period three implies chaos},
Am. Math. Monthly. 82(1975) 985--92.

\item[{[10]}] R. Li, \textit{A note on stronger forms of
sensitivity for dynamical systems}, Chaos. Soliton. Fract.
45(2012) 753--758.

\item[{[11]}] R. Li, Y. Zhao, H. Wang, R. Jiang, H. Liang,
\textit{${\mathscr F}$-sensitivity and (${\mathscr F}_{1} $,
${\mathscr F}_{2} $)-sensitivity between dynamical systems and
their induced hyperspace dynamical systems}, J. Nonlinear Sci.
Appl. 10(2017) 1640--1651.

\item[{[12]}]K. Liu,  X. Zhang, 
\textit{Auslander Yorke type dichotomy theorems for stronger version r-sensitivity}, arXiv:1905.08104v1 [math.DS] 20 May 2019.

\item[{[13]}] F. Tan, R. Zhang, \textit{On ${\mathscr
F}$-sensitive pairs}, Acta. Math. Sci. 31B(4)(2011) 1425--1435.

\item[{[14]}] X. Wu, R. Li, Y. Zhang, \textit{The multi-${\mathscr
F}$-sensitivity and (${\mathscr F}_{1} $, ${\mathscr F}_{2}
$)-sensitivity for product systems}, J. Nonlinear Sci. Appl.
9(2016) 4364--4370.

\item[{[15]}] X. Wu, J. Wang, G. Chen, \textit{${\mathscr
F}$-sensitivity and multi-sensitivity of hyperspatial dynamical
systems}, J. Math. Anal. Appl. 429(2015) 16--26.
\end{itemize}

\end{document}